\pgfplotsset{compat=1.12} 
\newtheorem{thm}{Theorem}[section]
\newtheorem{cor}[thm]{Corollary}
\newtheorem{lem}[thm]{Lemma}
\newtheorem{prop}[thm]{Proposition}
\theoremstyle{definition}
\newtheorem{defn}[thm]{Definition}
\newtheorem{ass}[thm]{Assumption}
\theoremstyle{remark}
\newtheorem{rem}[thm]{Remark}
\newcommand{\R}{\mathbb{R}}
\newcommand{\C}{\mathbb{C}}
\newcommand{\N}{\mathbb{N}}
\newcommand{\F}{\mathbb{F}}
\newcommand{\ind}{\bm{1}}
\newcommand{\ud}{\mathrm{d}}
\newcommand{\im}{\ensuremath{\mathsf{i}}}
\renewcommand{\Re}{\mathrm{Re}}
\renewcommand{\Im}{\mathrm{Im}}
\newcommand{\be}{\begin{equation}}
\newcommand{\ee}{\end{equation}}
\newcommand{\ba}{\begin{aligned}}
\newcommand{\ea}{\end{aligned}}
\numberwithin{equation}{section}
\newcommand{\cC}{\mathcal{C}}
\newcommand{\cD}{\mathcal{D}}
\newcommand{\cE}{\mathcal{E}}
\newcommand{\cF}{\mathcal{F}}
\newcommand{\cG}{\mathcal{G}}
\newcommand{\cK}{\mathcal{K}}
\newcommand{\cS}{\mathcal{S}}
\newcommand{\cU}{\mathcal{U}}
\newcommand{\cV}{\mathcal{V}}
\newcommand{\cW}{\mathcal{W}}
\newcommand{\cX}{\mathcal{X}}
\newcommand{\FF}{\mathbb{F}}
\newcommand{\GG}{\mathbb{G}}
\newcommand{\EE}{\mathbb{E}}
\newcommand{\PP}{\mathbb{P}}
\newcommand{\tildeN}{\widetilde{N}}
\newcommand{\uT}{\mathrm{T}}
\newcommand{\uCBITCL}{\mathrm{CBITCL}}
\newcommand{\horizon}{\mathcal{T}} 
\newcommand{\dbra}[1]{[\kern-0.15em[ #1 ]\kern-0.15em]}
\newcommand{\dbraco}[1]{[\kern-0.15em[ #1 [\kern-0.15em[}
\newcommand{\dbraoc}[1]{]\kern-0.15em] #1 ]\kern-0.15em]}
\newcommand{\dbraoo}[1]{]\kern-0.15em] #1 [\kern-0.15em[}
\title{CBI-time-changed L\'evy processes}
\author[C. Fontana]{Claudio Fontana}
\address{Department of Mathematics ``Tullio Levi Civita'', University of Padova (Italy)}
\email{fontana@math.unipd.it}
\author[A. Gnoatto]{Alessandro Gnoatto}
\address{Department of Economics, University of Verona (Italy)}
\email{alessandro.gnoatto@univr.it}
\author[G. Szulda]{Guillaume Szulda}
\address{CERMICS, Ecole des Ponts, Marne-la-Vall\'ee (France).}
\email{guillaume.szulda@enpc.fr}
\subjclass[2010]{60G44, 60G51, 60H20, 60J25, 60J80, 91G20, 91G30.}
\keywords{Branching process; change of time; affine process; stochastic volatility; moment explosion.}
\thanks{\emph{Acknowledgements:} The authors are thankful to two anonymous Reviewers for their constructive remarks which helped to improve the paper. C.F. is grateful to the Europlace Institute of Finance for financial support to this work. G.S. acknowledges hospitality and financial support from the University of Verona, where part of this work has been conducted. This work is part of the project BIRD 2019 ``Term structure dynamics in interest rate and energy markets: modelling and numerics'' and of the project STARS@UNIPD - PRISMA (Probabilistic Methods for Information in Security Markets), both funded by the University of Padova.}
\date{\today}
\begin{document}

\begin{abstract}
We introduce and study the class of {\em CBI-time-changed L\'evy processes} (CBITCL), obtained by time-changing a L\'evy process with respect to an integrated continuous-state branching process with immigration (CBI). We characterize CBITCL processes as solutions to a certain stochastic integral equation and relate them to affine stochastic volatility processes. We provide a complete analysis of the time of explosion of exponential moments of CBITCL processes and study their asymptotic behavior. In addition, we show that CBITCL processes are stable with respect to a suitable class of equivalent changes of measure. As illustrated by some examples, CBITCL processes are flexible and tractable processes with a significant potential for applications in finance.
\end{abstract}

\maketitle

\section{Introduction}
\label{sec:introcbitcl}

Since their introduction in \cite{KW71}, continuous-state branching processes with immigration (CBI processes) have represented a major topic of research in the theory of stochastic processes (we refer to \cite{Li20} for a recent overview of some of the main developments in the field). 
CBI processes belong to the class of affine processes (see \cite{DFS03}) and, due to their analytical tractability, have found important applications in mathematical finance, especially in interest rate modelling (see \cite{Fil01}).\\
In recent years, CBI processes have attracted a renewed interest in financial modelling, due to their capability of reproducing empirical features of financial time series such as volatility clustering and self-exciting jumps. 
In particular, self-exciting CBI processes have been exploited for the construction of single-curve and multi-curve interest rate models in \cite{JMS17,FGS21}, for the modelling of energy prices in \cite{JMSS19,CMS21} and for stochastic volatility modelling in \cite{JMSZ21}.

In this paper, we introduce and study the class of {\em CBI-time-changed L\'evy processes} (CBITCL), obtained by time-changing a L\'evy process with respect to the time integral of a CBI process. This construction combines the distributional flexibility of L\'evy processes with the self-exciting behavior of CBI processes, while retaining full analytical tractability. CBITCL processes have a significant potential for applications in finance, notably in markets with stochastic volatility, as illustrated in the companion paper \cite{FGS22}. In this work, we lay the theoretical foundations of CBITCL processes and derive some results that are especially motivated by financial applications.

The main contributions of the present paper can be outlined as follows:
\begin{itemize}
\item We characterize CBITCL processes as solutions to a stochastic integral equation which generalizes the well-known Dawson-Li representation of a CBI process (see \cite{DL06}) and we provide an additional characterization in terms of their semimartingale characteristics. Moreover, we study the relation with affine stochastic volatility processes, adopting an extension of the original definition of \cite{KR11}, and derive necessary and sufficient conditions for a general affine stochastic volatility process to be a CBITCL process.
\item By combining techniques of affine processes and specific properties of CBITCL processes, we analyse the time of explosion of exponential moments of CBITCL processes, under a suitable assumption on their dependence structure. This has important consequences in finance, where a CBITCL process can be used to represent the log-price of an asset. We also study the asymptotic behavior of CBITCL processes. While the existence of a stationary distribution of a CBI process is well understood (see, e.g., \cite[Section 10]{Li20}), we provide an asymptotic result analogous to \cite[Theorem 3.4]{KR11} for the distribution of a CBITCL process, making use of our characterization of the lifetime of exponential moments. 
\item In view of financial applications, we derive a class of equivalent changes of probability that leave invariant the class of CBITCL processes, up to a change in their parameters. Moreover, we illustrate how our results can be applied to some specifications of CBITCL processes that have been recently considered in mathematical finance.
\end{itemize}
We emphasize that, while some of our results can be derived from the general theory of affine processes, we obtain more refined statements under minimal technical assumptions by exploiting the specific structure of CBITCL processes.

Our work is naturally related to the use of stochastic changes of time in finance. Starting from the seminal work \cite{Clark73}, time-changed processes have been widely adopted as models for asset prices and we refer to \cite{BNS15,S16} for detailed accounts on the topic. In particular, our work builds on the contributions of \cite{CGMY03,CW04}, where stochastic volatility models have been constructed by relying on time-changed L\'evy processes. An empirical analysis of several specifications of such models has been conducted in \cite{HW04}. While their analysis only considers changes of time driven by square-root diffusions, \cite{HW04} point out that a promising direction of research is the study of models where the activity rate of the time change process exhibits high-frequency jumps. This has been confirmed by the empirical analysis conducted in \cite{FHM21}.
Our work contributes to this line of research by developing a general theoretical framework for the use of CBI processes as changes of time for L\'evy processes, also allowing for the presence of self-exciting jumps in the activity rate of the time change and for dependence between the L\'evy process and the activity rate.

{\bf Structure of the paper.}
In Section \ref{sec:defcbitcl}, we state the definition of CBITCL processes, characterize them as solutions to a stochastic integral equation and study their relation to affine stochastic volatility processes. In Section \ref{sec:moments}, under a suitable assumption on the dependence structure of a CBITCL process, we derive an explicit characterization of the lifetime of exponential moments and of the asymptotic behavior of CBITCL processes. In Section \ref{sec:girsanovcbitcl}, we present a class of equivalent changes of probability that leave invariant the class of CBITCL processes. Finally, in Section \ref{sec:examples} we present two examples of CBITCL processes that are relevant for financial applications and discuss the applications of CBITCL processes to option pricing.

{\bf Notation.} In the following, we denote by $\R_+$ ($\R_-$, resp.) the set of non-negative (non-positive, resp.) real numbers. We denote by $\C_+$ ($\C_-$, resp.) the set of complex numbers with non-negative (non-positive, resp.) real part. The set $\im\R$ is the set of complex numbers with null real part.

\section{Definition and characterization}\label{sec:defcbitcl}

In this section, we state the definition of a CBITCL process and prove some foundational results for this class of processes. We let $(\Omega,\cF,\PP)$ be a probability space supporting the following objects:
\begin{itemize}
\item[(i)] a one-dimensional L\'evy process $K=(K_t)_{t\geq0}$ with triplet $(\beta, 0, \nu)$, where $\beta\geq0$ and $\nu$ is a L\'evy measure\footnote{We recall that a measure $\gamma$ on $\R^n$ is a L\'evy measure if $\gamma(\{0\})=0$ and $\int_{\R^n}(|x|^2\wedge 1)\gamma(\ud x)<+\infty$.} on $\R_+$ such that $\int_0^1 {x\,\nu(\ud x)}<+\infty$. 
The exponent $\Psi:\C_-\to\C$ is given by
\be\label{eq:immigrationmechanism}
\Psi(u) = \beta u + \int_{\R_+} {(e^{u x} - 1)\nu(\ud x)}, 
\qquad \text{for all } u \in\C_-;
\ee	
\item[(ii)] a two-dimensional L\'evy process $(M, N) =((M_t, N_t))_{t\geq0}$, independent of $K$, with triplet $(b, \sigma^2, \Pi)$, where $\Pi(\ud x, \ud z)$ is a L\'evy measure on $\R_+\times\R$ and 
\be\label{eq:triplet}
b = \begin{pmatrix}
-b_X \\
b_Z
\end{pmatrix}, \qquad
\sigma^2 = \begin{pmatrix}
\sigma_X^2 & \rho\,\sigma_X\sigma_Z \\
\rho\,\sigma_X\sigma_Z & \sigma_Z^2
\end{pmatrix}, \qquad
\int_{[1, +\infty)\times\R} {x\,\Pi(\ud x, \ud z) < +\infty},
\ee
with $b_X, b_Z\in\R$, $\sigma_X, \sigma_Z\in\R_+$ and $\rho\in[-1, 1]$. The exponent $\Lambda:\C_-\times\im\R\rightarrow\C$ is given by 
\be	\label{eq:levyexp}\begin{aligned}
\Lambda(u_1, u_2) &= -b_X u_1 + b_Z u_2 + \frac{1}{2}\sigma_X^2u_1^2 + \rho\,\sigma_X\sigma_Z\,u_1 u_2 + \frac{1}{2}\sigma_Z^2u_2^2\\
&\quad+\int_{\R_+\times\R}{\bigl(e^{u_1 x + u_2 z} - 1 - u_1 x - u_2z\ind_{\{|z|<1\}}\bigr)\Pi(\ud x, \ud z)},
\end{aligned}\ee
for all $(u_1, u_2)\in\C_-\times\im\R$.
The process $M$ is therefore a spectrally positive L\'evy process with finite first moment and  exponent $\Phi:\C_-\to\C$ given by 
\be\label{eq:branchingmechanism}
\Phi(u) = -b_X u +  \frac{1}{2}\sigma_X^2u^2 + \int_{\R_+\times\R}{(e^{ux} - 1 - ux)\Pi(\ud x, \ud z)}, \qquad \text{for all } u \in\C_-,
\ee
whereas $N$ is a general L\'evy process whose exponent $\Xi:\im\R\to\C$ is given by
\be\label{eq:xi}
\Xi(u) = b_Z u + \frac{1}{2}\sigma_Z^2u^2 + \int_{\R_+\times\R}{(e^{uz} - 1 - uz\ind_{\{|z|<1\}})\Pi(\ud x, \ud z)}, 
\qquad \text{for all } u \in \im\R.
\ee
\end{itemize}

\begin{defn}\label{def:cbitcl}
A process $(X,Z)=((X_t, Z_t))_{t\geq0}$ taking values in the state space $\R_+\times\R$ is a \emph{CBI-time-changed L\'evy process} (CBITCL) if it holds that
\be\label{eq:stochtimechangeequation}
\begin{cases}
X_t = X_0 + M_{Y_t} + K_t,\\
Z_t = N_{Y_t},
\end{cases}
\ee
for all $t \geq 0$, with $X_0\geq0$ and where $Y=(Y_t)_{t\geq0}$ denotes the process $Y:=\int_0^{\cdot} {X_s\,\ud s}$.
\end{defn}

The above definition is well-posed due to the existence of a unique strong solution $(X,Z)$ to the stochastic time change equation \eqref{eq:stochtimechangeequation}, which follows as a special case of \cite[Theorem 1]{CGB17}. 
In addition, by \cite[Theorem 2]{EPU13}, the first component $X$ of a CBITCL process $(X,Z)$ is a conservative stochastically continuous {\em continuous-state branching process with immigration} (CBI) in the sense of \cite{KW71}, with initial value $X_0$, immigration mechanism $\Psi$ and branching mechanism $\Phi$. Note that, since $X$ has c\`adl\`ag paths, the integral $Y:=\int_0^{\cdot}X_s\,\ud s$ is always well defined as a non-decreasing process and can therefore be used as a change of time. 

In the following, we write that a process $(X, Z)$ is $\uCBITCL(X_0, \Psi, \Lambda)$ as a shorthand notation to denote the fact that $(X, Z)$ is a CBI-time-changed L\'evy process in the sense of Definition \ref{def:cbitcl}, with $X_0\geq0$ and where the exponents $\Psi$ and $\Lambda$ are given in \eqref{eq:immigrationmechanism} and \eqref{eq:levyexp}, respectively.

\begin{rem}
Definition \ref{def:cbitcl} can be extended to processes taking values on the state space $\R^m_+\times\R^n$, for $m,n\in\N$. As shown in \cite{CGB17}, this would generate processes that belong to the affine class. However, it seems difficult to derive explicit results such as a characterization of the finiteness of exponential moments in the multi-dimensional case. For this reason and for ease of exposition, in the present work we restrict our attention to two-dimensional CBITCL processes.
\end{rem}

\subsection{CBITCL processes as solutions to stochastic integral equations}		\label{sec:integral_equation}

It is well known that a CBI process can be characterized in two equivalent ways: as the solution to the stochastic integral equation of Dawson and Li (see \cite{DL06}) and as the solution to a stochastic time change equation of Lamperti-type (see \cite{EPU13}). It can be shown that these two representations of a CBI process are equivalent in a weak sense (see \cite[Theorem 2.12]{phdthesis_szulda}).
In the present context of CBITCL processes, Definition \ref{def:cbitcl} builds upon the Lamperti-type representation of a CBI process. In this section, we show that the Dawson-Li representation can be extended to CBITCL processes. 
To this effect, let us introduce the following objects, assumed to be mutually independent:
\begin{itemize}
\item  a two-dimensional Brownian motion $B=(B_t)_{t\geq0}$ with independent components;
\item a Poisson random measure $N_0(\ud t, \ud x, \ud z)$ on $(0,+\infty)\times\R_+\times\R$ of compensator $\ud t\,\nu(\ud x)\,\delta_0(\ud z)$;
\item a Poisson random measure $N_1(\ud t, \ud u, \ud x, \ud z)$ on $(0,+\infty)^2\times\R_+\times\R$ of compensator $\ud t\,\ud u\,\Pi(\ud x, \ud z)$.
\end{itemize}
In the following, we shall always use the tilde notation to denote compensated random measures.
Let $\sigma\in\R^{2\times2}$ be a matrix with non-negative diagonal elements such that $\sigma\sigma^{\top}=\sigma^2$ and, for $X_0\geq0$, consider the following two-dimensional stochastic integral equation:
\begin{align}
\begin{pmatrix}
X_t \\
Z_t
\end{pmatrix} &= \begin{pmatrix}
X_0 \\
0
\end{pmatrix} + \int_0^t {\left(\begin{pmatrix}\beta\\0\end{pmatrix} + X_s\,b\right)\ud s}  
+ \sigma\int_0^t {\sqrt{X_s}\,\ud B_s} 
+ \int_0^t\int_{\R_+\times\R} {\begin{pmatrix}
	x \\
	0
	\end{pmatrix}N_0(\ud s, \ud x, \ud z)}
	\notag\\
&\quad+\int_0^t\int_0^{X_{s^-}}\!\int_{\R_+\times\R} {\begin{pmatrix}
	0 \\
	z\ind_{\{|z| \geq 1\}}
	\end{pmatrix}N_1(\ud s, \ud u, \ud x, \ud z)}
	\label{eq:dawsonli}\\
&\quad+\int_0^t\int_0^{X_{s^-}}\!\int_{\R_+\times\R} {\begin{pmatrix}
	x \\
	z\ind_{\{|z| < 1\}}
	\end{pmatrix}\tildeN_1(\ud s, \ud u, \ud x, \ud z)}.
	\notag
\end{align}

If $(\Omega,\cF,\FF=(\cF_t)_{t\geq0},\PP)$ is a filtered probability space supporting $B$, $N_0$ and $N_1$ with the above properties, then equation \eqref{eq:dawsonli} admits a unique strong solution $(X,Z)$ defined on $(\Omega,\cF,\FF,\PP)$. 
Indeed, by \cite[Corollary 5.2]{FL10}, there exists a unique strong solution $X$ to the first component of the stochastic integral equation \eqref{eq:dawsonli}, which corresponds to the Dawson-Li representation of the CBI process $X$, see \cite{DL06}. In turn, noting that the right-hand side of \eqref{eq:dawsonli} depends only on the process $X$, this implies the uniqueness of the process $Z$ defined by the second component of \eqref{eq:dawsonli}.

The next theorem asserts that defining CBITCL processes as solutions to the stochastic integral equation \eqref{eq:dawsonli} is equivalent to Definition \ref{def:cbitcl}.
In the following, we say that a process $(X,Z)$ defined on the probability space $(\Omega,\cF,\PP)$ is a {\em weak solution} to \eqref{eq:dawsonli} if there exists an enlargement of the probability space supporting a two-dimensional Brownian motion $B$ and Poisson random measures $N_0$ and $N_1$ as above such that $(X,Z)$ satisfies \eqref{eq:dawsonli}.
In a similar manner, we say that a process $(X,Z)$ defined on a probability space $(\Omega,\cF,\PP)$ is {\em equivalent} to a $\uCBITCL(X_0, \Psi, \Lambda)$ if there exists an enlargement of the probability space supporting independent L\'evy processes $K$ and $(M,N)$ with exponents $\Psi$ and $\Lambda$, respectively, such that $(X,Z)$ satisfies \eqref{eq:stochtimechangeequation}.

\begin{thm}\label{thm:weakequivalencecbitcl}
If the process $(X,Z)$ is a $\uCBITCL(X_0, \Psi, \Lambda)$, then it is a weak solution to \eqref{eq:dawsonli}. Conversely, if the process $(X,Z)$ is solution to \eqref{eq:dawsonli} on some filtered probability space $(\Omega,\cF,\FF,\PP)$, then it is equivalent to a $\uCBITCL(X_0, \Psi, \Lambda)$, with $\Psi$ and $\Lambda$ given by \eqref{eq:immigrationmechanism} and \eqref{eq:levyexp}, respectively.
\end{thm}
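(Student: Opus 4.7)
The plan is to establish the two directions separately, exploiting the Dawson-Li representation of the CBI component $X$ together with a Lamperti-type change of time for the second component. In either characterization the first component $X$ is a conservative CBI process with immigration mechanism $\Psi$ and branching mechanism $\Phi$, so the real content of the statement lies in matching the second component and its joint law with $X$.

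For the forward direction, I would start from $(X,Z)$ being $\uCBITCL(X_0,\Psi,\Lambda)$ and compute the semimartingale characteristics of $(X,Z)$ directly from \eqref{eq:stochtimechangeequation}. The independence of $K$ and $(M,N)$, combined with the time-change formula applied to the absolutely continuous change of time $Y$ (with density $X_s$), yields that $(X,Z)$ admits absolutely continuous characteristics with differential drift $(\beta,0)^{\top} + X_s\, b$, diffusion $X_s\, \sigma^2$, and jump compensator $\nu(\ud x)\delta_0(\ud z) + X_s\,\Pi(\ud x,\ud z)$. These coincide with the characteristics of the right-hand side of \eqref{eq:dawsonli}. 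A standard representation theorem for semimartingales with absolutely continuous characteristics (as in Jacod-Shiryaev) then furnishes, possibly on an enlargement of $(\Omega,\cF,\PP)$, a two-dimensional Brownian motion $B$ and Poisson random measures $N_0,N_1$ with the prescribed compensators such that $(X,Z)$ is a weak solution to \eqref{eq:dawsonli}.

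For the converse, let $(X,Z)$ solve \eqref{eq:dawsonli} on $(\Omega,\cF,\FF,\PP)$. Its first component alone reproduces the Dawson-Li SDE for a CBI process with mechanisms $(\Psi,\Phi)$, hence $X$ is CBI, $Y_t := \int_0^t X_s\,\ud s$ is well defined, and I let $\tau_s := \inf\{t : Y_t > s\}$ be its right-continuous inverse. I would split \eqref{eq:dawsonli} into an ``immigration'' block, driven by $\beta\,\ud s$ and $N_0$, and a ``branching'' block, driven by $X_s\, b\,\ud s$, $\sigma\sqrt{X_s}\,\ud B_s$, and by $N_1$ restricted to $\{u \leq X_{s^-}\}$. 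The immigration block directly assembles into a L\'evy process $K$ with exponent $\Psi$. The branching block is time-changed by $\tau$: under this change of time, the intensity $X_{s^-}\,\ud s\,\Pi(\ud x,\ud z)$ of the thinned jump measure pushes forward to $\ud s'\,\Pi(\ud x,\ud z)$, and the quadratic variation of $\sigma\int_0^{\tau_s}\sqrt{X_r}\,\ud B_r$ equals $\sigma^2 s$; a two-dimensional L\'evy-type characterization together with standard Poisson-random-measure arguments then identifies the resulting object as a L\'evy process $(M,N)$ with triplet $(b,\sigma^2,\Pi)$, independent of $K$ by construction, so that \eqref{eq:stochtimechangeequation} holds.

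The main technical obstacle is the inverse time change on the random set $\{X=0\}$, where $Y$ is locally constant: on these intervals $M$ and $N$ are not determined by $(X,Z)$ and must be extended by sampling independent L\'evy increments on a suitable enlargement of the probability space. Independence of $K$ from $(M,N)$ and the correct marginal laws are finally confirmed by computing the joint characteristic functional $\EE[\exp(\im u_1 K_t + \im u_2 M_{Y_t} + \im u_3 N_{Y_t})]$ via It\^o's formula and identifying it with $\exp(t\Psi(\im u_1) + Y_t\Lambda(\im u_2,\im u_3))$, which forces the required factorization and the L\'evy structure after the change of time.
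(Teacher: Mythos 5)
Your overall architecture matches the paper's: in the forward direction you time-change the L\'evy--It\^o decomposition of $(M,N)$ and recover $B$, $N_0$, $N_1$ on an enlargement; in the converse you split \eqref{eq:dawsonli} into an immigration block giving $K$ and a branching block that you invert through $\tau$, extending by independent L\'evy increments on the intervals where $Y$ is constant. Up to that point the proposal is sound and essentially the paper's argument.

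The genuine gap is your final independence step. First, the identity you propose to verify, $\EE[\exp(\im u_1K_t+\im u_2M_{Y_t}+\im u_3N_{Y_t})]=\exp(t\Psi(\im u_1)+Y_t\Lambda(\im u_2,\im u_3))$, is ill-posed: the left-hand side is a number while the right-hand side is a random variable ($Y_t$ is not deterministic), and the correct unconditional transform is the affine formula of Proposition \ref{prop:cbitclaffine}, which involves the Riccati solutions and does not factor in the way you suggest (note that $Y=\int_0^\cdot X_s\,\ud s$ is itself a functional of $K$, so $K_t$ and $Y_t$ are not independent). Second, and more fundamentally, even a correct computation of the joint law of $(K_t, M_{Y_t}, N_{Y_t})$ for each $t$ cannot establish what the statement requires, namely independence of the \emph{processes} $K$ and $(M,N)$, where $(M,N)$ is evaluated at deterministic times: the law of the pair sampled along the random clock $Y$ does not determine the joint law of $(K,(M,N))$. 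The paper resolves this differently: having identified $L$ as a L\'evy process in the time-changed filtration, it invokes the criterion that two L\'evy processes with no common jumps (and no common Gaussian part) are independent, and then proves $|\Delta L_t\,\Delta K_t|=0$ a.s.\ for every $t$ by a careful case analysis on $\{\tau(t-)<t\}$, $\{\tau(t-)=t\}$ and $\{t<\tau(t-)<+\infty\}$, using the independence of $N_0$ and $N_1$, the predictability of $\tau(t-)$, and the quasi-left-continuity of $K$. Some argument of this type is needed; the characteristic-functional computation you outline would not close the proof.
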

\begin{proof}
Let $(X,Z)$ be a $\uCBITCL(X_0, \Psi, \Lambda)$ and denote by $\FF^{(M,N)}$ the natural filtration of $(M,N)$. Making use of the L\'evy-It\^o decomposition of $(M,N)$, we have that
\[
\begin{pmatrix}
M_{Y_t} \\
N_{Y_t}
\end{pmatrix} = b\,Y_t  + \sigma\, W_{Y_t} + \int_0^{Y_t}\int_{\R_+\times\R} {\begin{pmatrix}
	0 \\
	z\ind_{\{|z| \geq 1\}}
	\end{pmatrix}\mu(\ud s, \ud x, \ud z)}
+\int_0^{Y_t}\int_{\R_+\times\R} {\begin{pmatrix}
	x \\
	z\ind_{\{|z| < 1\}}
	\end{pmatrix}\tilde{\mu}(\ud s, \ud x, \ud z)},
\]
for all $t\geq0$, where $(W_t)_{t\geq0}$ is two-dimensional Brownian motion in $\FF^{(M,N)}$ and $\mu(\ud t, \ud x, \ud z)$ is a Poisson random measure with compensator $\ud t\,\Pi(\ud x, \ud z)$ in $\FF^{(M,N)}$, independent of $W$. 
Let the filtration $\GG^{(M,N),K}$ be defined as the initial enlargement of $\FF^{(M,N)}$ with respect to $\sigma(K_s;s\geq0)$, i.e., $\cG^{(M,N),K}_t:=\cF^{(M,N)}_t\vee\sigma(K_s;s\geq0)$, for all $t\geq0$. 
By part (2) of \cite[Lemma 5]{EPU13}, it holds that $Y_t$ is a $\GG^{(M,N),K}$-stopping time, for each $t\geq0$. Therefore, the process $(Y_t)_{t\geq0}$ is a change of time in $\GG^{(M,N),K}$, in the sense of \cite[Definition 10.1]{jacod79}. By continuity of $Y$, the random measure $\mu(\ud t, \ud x, \ud z)$ is adapted to $Y$ (see \cite[Definition 10.25]{jacod79}) and, due to the independence of $(M,N)$ and $K$, it has the same compensator $\ud t\,\Pi(\ud x, \ud z)$ in both filtrations $\GG^{(M,N),K}$ and $\FF^{(M,N)}$. Applying \cite[Theorems 10.27 and 10.28]{jacod79}, we can perform a change of time and obtain
\begin{align*}
\int_0^{Y_t}\int_{\R_+\times\R} {\begin{pmatrix}
	0 \\
	z\ind_{\{|z| \geq 1\}}
	\end{pmatrix}\mu(\ud s, \ud x, \ud z)}
&= \int_0^t\int_{\R_+\times\R} {\begin{pmatrix}
	0 \\
	z\ind_{\{|z| \geq 1\}}
	\end{pmatrix}\mu(X_s\,\ud s, \ud x, \ud z)},\\
\int_0^{Y_t}\int_{\R_+\times\R} {\begin{pmatrix}
	x \\
	z\ind_{\{|z| < 1\}}
	\end{pmatrix}\tilde{\mu}(\ud s, \ud x, \ud z)}
&= 	\int_0^t\int_{\R_+\times\R} {\begin{pmatrix}
	x \\
	z\ind_{\{|z| < 1\}}
	\end{pmatrix}\tilde{\mu}(X_s\,\ud s, \ud x, \ud z)}, 
\end{align*}
for all $t\geq0$.
By \cite[Theorem II.7.4]{IW89}, on a suitable extension of the probability space there exists a Poisson random measure $N_1(\ud t, \ud u, \ud x, \ud z)$ with compensator $\ud t\,\ud u\,\Pi(\ud x, \ud z)$ such that
\begin{align*}
\int_0^t\int_{\R_+\times\R} {\begin{pmatrix}
	0 \\
	z\ind_{\{|z| \geq 1\}}
	\end{pmatrix}\mu(X_s\,\ud s, \ud x, \ud z)}
&= \int_0^t\int_0^{X_{s^-}}\!\int_{\R_+\times\R} {\begin{pmatrix}
	0 \\
	z\ind_{\{|z| \geq 1\}}
	\end{pmatrix}N_1(\ud s, \ud u, \ud x, \ud z)},\\
\int_0^t\int_{\R_+\times\R} {\begin{pmatrix}
	x \\
	z\ind_{\{|z| < 1\}}
	\end{pmatrix}\tilde{\mu}(X_s\,\ud s, \ud x, \ud z)}
&= \int_0^t\int_0^{X_{s^-}}\!\int_{\R_+\times\R} {\begin{pmatrix}
	x \\
	z\ind_{\{|z| < 1\}}
	\end{pmatrix}\tildeN_1(\ud s, \ud u, \ud x, \ud z)},
\end{align*}
for all $t\geq0$.
Proceeding in a similar way and making use of \cite[Theorem 7.1']{IW89}, on a suitable extension of the probability space there exists a two-dimensional Brownian motion $B=(B_t)_{t\geq0}$ such that $ W_{Y_t}=\int_0^t\sqrt{X_s}\ud B_s$, for all $t\geq0$.
We have therefore obtained that
\be	\label{eq:proof_time_change}\begin{aligned}
\begin{pmatrix}
M_{Y_t} \\
N_{Y_t}
\end{pmatrix} &= \int_0^t {X_s\,b\,\ud s}  + \sigma\int_0^t {\sqrt{X_s}\,\ud B_s} + \int_0^t\int_0^{X_{s^-}}\!\int_{\R_+\times\R} {\begin{pmatrix}
	0 \\
	z\ind_{\{|z| \geq 1\}}
	\end{pmatrix}N_1(\ud s, \ud u, \ud x, \ud z)}\\
&\quad+\int_0^t\int_0^{X_{s^-}}\!\int_{\R_+\times\R} {\begin{pmatrix}
	x \\
	z\ind_{\{|z| < 1\}}
	\end{pmatrix}\tildeN_1(\ud s, \ud u, \ud x, \ud z)}.
\end{aligned}\ee
The L\'evy-It\^o decomposition of the process $K$ can be written as $K_t = \beta t + \int_0^t\int_{\R_+\times\R} {x\,N_0(\ud s, \ud x, \ud z)}$, for all $t\geq0$, where $N_0(\ud t, \ud x, \ud z)$ is a Poisson random measure with compensator $\ud t\,\nu(\ud x)\,\delta_0(\ud z)$.
Adding the L\'evy-It\^o decomposition of $K$ to \eqref{eq:proof_time_change} and comparing with \eqref{eq:stochtimechangeequation}, we have proved that $(X,Z)$ solves equation \eqref{eq:dawsonli}, on a suitable extension of the original probability space.

Conversely, suppose that $(X,Z)$ solves \eqref{eq:dawsonli} on some filtered probability space $(\Omega,\cF,\FF,\PP)$.
We first observe that the process $K=(K_t)_{t\geq0}$ defined by $K_t:=\beta t + \int_0^t\int_{\R_+\times\R} {x\,N_0(\ud s, \ud x, \ud z)}$, for all $t\geq0$, is a L\'evy process with triplet $(\beta, 0, \nu)$ and exponent $\Psi$ given by \eqref{eq:immigrationmechanism}.
Consider then the two-dimensional process $V:=(X-K,Z)$. In order to show that $V$ is a time-changed L\'evy process, we follow the proof of \cite[Theorem 3.2]{Kallsen06}. We first note that the process $V$ is constant on each interval $[r,s]\subseteq\R_+$ such that $Y_r=Y_s$ a.s. Indeed, for any such interval, it necessarily holds that $X=0$ a.s. on $[r,s]$ (Lebesgue-a.e.), and, therefore, \eqref{eq:dawsonli}  implies that $V$ is a.s. constant on $[r,s]$.
Let $Y_{\infty} := \lim_{t\to+\infty}Y_t$, which is well-defined since the process $Y$ is non-decreasing, and define the inverse time change $\tau(t) := \inf\{ s \geq 0 : Y_s > t \}$, for all $t \geq 0$. 
Define the process $L^1=(L^1_t)_{t < Y_{\infty}}$ by\footnote{We point out that it may happen that $Y_{\infty}<+\infty$, since zero is an absorbing state for $X$ when $\Psi\equiv0$.} 
\[
L^1_t := V_{\tau(t)},
\qquad\text{ for all }t < Y_{\infty}.
\] 
By \cite[Lemma 10.14]{jacod79}, $V$ is adapted to $(\tau(t))_{t\geq0}$ and it holds that $V_t=L^1_{Y_t}$, for all $t\geq0$.
Since $(X,Z)$ solves \eqref{eq:dawsonli}, the semimartingale $V=(X-K,Z)$ has characteristics $(Yb,Y\sigma^2,\ud Y_t\,\Pi(\ud x,\ud z))$ in the filtration $\FF$ with respect to the truncation function $h$ given by $h(x):=(x_1,x_2\ind_{\{|x_2|<1\}})$, for $x=(x_1,x_2)\in\R^2$.
By \cite[Theorem 10.16]{jacod79}, $L^1$ is a semimartingale in the time-changed filtration $(\cF_{\tau(t)})_{t\geq0}$ on the stochastic interval $\llbracket 0, Y_{\infty} \llbracket$. Similarly as in the proof of \cite[Theorem 3.2]{Kallsen06}, \cite[Theorems 10.17 and 10.27]{jacod79} together with the fact that $Y_{\tau(t)}=t$, for all $t<Y_{\infty}$, imply that the semimartingale characteristics $(B,C,F)$ of $L^1$ in $(\cF_{\tau(t)})_{t\geq0}$ are given by
\be	\label{eq:char_time_change}
B_t = Y_{\tau(t)}\,b = t\,b, \quad 
C_t = Y_{\tau(t)}\,\sigma^2 = t\,\sigma^2,\quad 
F(\ud t,\ud x,\ud z) = \ud Y_{\tau(t)}\,\Pi(\ud x,\ud z) = \ud t\,\Pi(\ud x,\ud z), 
\ee
for all $t<Y_{\infty}$. 
Let  $L^2=(L_t^2)_{t \geq 0}$ be a two-dimensional L\'evy process with triplet $(b, \sigma^2, \Pi)$ and exponent \eqref{eq:levyexp}, independent of $V$ and $K$, possibly defined on a suitable enlargement of the probability space. 
Define the process $L=(L_t)_{t\geq0}$ by
\[
L_t := L^1_t\ind_{\{t<Y_{\infty}\}} + (L^1_{Y_{\infty}}+L^2_{t-Y_{\infty}})\ind_{\{t\geq Y_{\infty}\}},
\qquad\text{ for all } t\geq0,
\]
where $L^{1}_{Y_{\infty}}:=\lim_{t\to+\infty}V_t=:V_{\infty}$ on $\{Y_{\infty}<+\infty\}$, which is well-defined since $V$ is a semimartingale up to infinity on $\{Y_{\infty}<+\infty\}$, see \cite{CS05}.
Note that $Y_{\infty}$ is a stopping time in the time-changed filtration $(\cF_{\tau(t)})_{t\geq0}$ and, therefore, $L$ is a semimartingale in $(\cF_{\tau(t)})_{t\geq0}$. Moreover, by the results of \cite[Section 10.2b]{jacod79}, it can be shown that the semimartingale characteristics of $L$ are given by \eqref{eq:char_time_change}, which by \cite[Corollary II.4.19]{JS03} implies that $L$ is a L\'evy process with triplet $(b, \sigma^2, \Pi)$. 
Since $V_t=L_{Y_t}$, for all $t\geq0$, to complete the proof it remains to show that $L$ is independent of $K$.
To this effect, in view of \cite[Theorem 11.43]{HWY92}, it suffices to prove that $|\Delta L_t\,\Delta K_t|=0$ a.s. for all $t\geq0$. Using the definition of $L$ and \cite[Theorem 10.27]{jacod79}, we can compute
\be	\label{eq:stopping_times}\ba
\EE\bigl[|\Delta L_t\,\Delta K_t|\bigr]
&= \EE\bigl[\ind_{\{t<Y_{\infty}\}}|\Delta L^1_t\,\Delta K_t|\bigr]	\\
&= \EE\bigl[\ind_{\{\tau(t)<+\infty\}}|\Delta V_{\tau(t-)}\,\Delta K_t|\bigr]	\\
&\leq  \EE\bigl[\ind_{\{\tau(t-)<t\}}|\Delta V_{\tau(t-)}\,\Delta K_t|\bigr] 
+  \EE\bigl[\ind_{\{t<\tau(t-)<+\infty\}}|\Delta V_{\tau(t-)}\,\Delta K_t|\bigr], 
\ea\ee
where we have used the fact that $ \EE[\ind_{\{\tau(t-)=t\}}|\Delta V_{\tau(t-)}\,\Delta K_t|]=0$ since the random measures $N_0$ and $N_1$ in \eqref{eq:dawsonli} are assumed to be independent. Moreover, using the fact that $\tau(t-)$ is a predictable time in the filtration $\FF$ together with \cite[Proposition II.1.17]{JS03}, it holds that
\[
\EE\bigl[\ind_{\{t<\tau(t-)<+\infty\}}|\Delta V_{\tau(t-)}\,\Delta K_t|\bigr]
= \EE\bigl[\ind_{\{t<\tau(t-)<+\infty\}}|\Delta K_t|\EE[|\Delta V_{\tau(t-)}||\cF_{\tau(t-)-}]\bigr]=0.
\]
In an analogous way, since $K$ is quasi-left-continuous, we have that 
\[
\EE\bigl[\ind_{\{\tau(t-)<t\}}|\Delta V_{\tau(t-)}\,\Delta K_t|\bigr] 
= \EE\bigl[\ind_{\{\tau(t-)<t\}}|\Delta V_{\tau(t-)}|\EE[|\Delta K_t||\cF_{t-}]\bigr]=0.
\]
In view of \eqref{eq:stopping_times}, this implies that $|\Delta L_t\,\Delta K_t|=0$ a.s. for all $t\geq0$, thus completing the proof.
\end{proof}

From now on, if a CBITCL process $(X,Z)$ is directly defined as the unique strong solution to \eqref{eq:dawsonli} on some filtered probability space $(\Omega,\cF,\FF,\PP)$, we will say that $(X,Z)$ is given by its \emph{extended Dawson-Li representation}.
Representation \eqref{eq:dawsonli} can be especially useful for the numerical simulation of CBITCL processes (compare with \cite[Appendix B]{FGS21} in the case CBI processes).

\begin{rem}
One of the characteristic features of CBI processes is their self-exciting behavior. As can be deduced from equation \eqref{eq:dawsonli}, this behavior is inherited by the class of CBITCL processes. In particular, we can observe the following:
\begin{itemize}
\item The local martingale terms of the process $X$ depend on the current level of the process itself. This generates self-excitation since large values of the process increase its volatility. In particular, a large jump of $X$ increases the likelihood of further jumps of $X$.
\item The volatility of $Z$ is determined by $X$. Therefore, large values of $X$ are associated to an increased volatility of both processes, thus generating volatility clustering effects in $(X,Y)$.
\end{itemize}
As mentioned in the introduction, self-exciting and volatility clustering phenomena are often present in financial time series. Together with the remarkable analytical tractability, this makes CBITCL processes especially appropriate for financial modeling, as illustrated in \cite{FGS22} in the context of foreign currency markets with stochastic volatility (see also the examples in Section \ref{sec:examples}).
\end{rem}

The next proposition characterizes CBITCL processes in terms of their semimartingale differential characteristics (see \cite{Kallsen06} for additional information on this notion of characteristics).

\begin{prop}\label{prop:cbitclsemimartingale}	
Let $(X,Z)$ be a $\uCBITCL(X_0, \Psi, \Lambda)$. Then, $(X,Z)$ is a semimartingale with differential characteristics $(B,C,F)$ relative to the truncation function $h(x)=x\ind_{\{|x| < 1\}}$ given by
\be\begin{gathered}
B_t = \begin{pmatrix}\beta + \int_0^1 {x\,\nu(\ud x)} \\ 0\end{pmatrix} 
+ X_{t^{-}}\Biggl(b - \begin{pmatrix}
\int_{[1, +\infty)\times\R} {x\,\Pi(\ud x, \ud z)} \\
0
\end{pmatrix}\Biggr), 
\qquad
C_t = X_{t^{-}}\,\sigma^2,\\
F_t(\ud x,\ud z) = \nu(\ud x)\delta_{0}(\ud z) + X_{t^{-}}\Pi(\ud x, \ud z),
\qquad\text{ for all }t\geq0.
\label{eq:char}
\end{gathered}\ee
Conversely, if $(X,Z)$ is a semimartingale with differential characteristics given as in \eqref{eq:char}, with $\beta\geq0$, $b$ and $\sigma$ as in \eqref{eq:triplet}, $\nu$ and $\Pi$ L\'evy measures on $\R_+$ and $\R_+\times\R$, respectively, such that $\int_0^1 x\,\nu(\ud x)<+\infty$ and $\int_{[1, +\infty)\times\R} {x\,\Pi(\ud x, \ud z) < +\infty}$, then it is equivalent to a $\uCBITCL(X_0, \Psi, \Lambda)$, where $\Psi$ and $\Lambda$ are determined by $(\beta,b,\sigma^2,\nu,\Pi)$ as in \eqref{eq:immigrationmechanism} and \eqref{eq:levyexp}, respectively.
\end{prop}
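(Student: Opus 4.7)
The plan is to reduce both implications to the extended Dawson--Li representation \eqref{eq:dawsonli} and then invoke Theorem~\ref{thm:weakequivalencecbitcl}.

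For the forward direction, assume that $(X,Z)$ is a $\uCBITCL(X_0,\Psi,\Lambda)$. By Theorem~\ref{thm:weakequivalencecbitcl}, on a suitable enlargement of the probability space the process admits the representation \eqref{eq:dawsonli}, from which the semimartingale differential characteristics can be read off directly. The continuous martingale part $\sigma\int_0^{\cdot}\sqrt{X_s}\,\ud B_s$ yields $C_t=X_{t^-}\sigma^2$. The two Poisson integrals appearing in \eqref{eq:dawsonli} give a jump measure with compensator $\nu(\ud x)\delta_0(\ud z)\,\ud s+X_{s^-}\Pi(\ud x,\ud z)\,\ud s$, which matches $F_t(\ud x,\ud z)$. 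For the drift $B_t$, the explicit finite-variation term in \eqref{eq:dawsonli} contributes $(\beta,0)^{\top}+X_{t^-}b$; rewriting the uncompensated $N_0$-integral in canonical form with truncation $h$ adds $(\int_0^1 x\,\nu(\ud x),0)^{\top}$; and rewriting the $X$-component of the compensated $N_1$-integral in canonical form (recall that in \eqref{eq:dawsonli} the mark $x$ is not truncated, consistent with the integrability assumption in \eqref{eq:triplet}) subtracts $X_{t^-}(\int_{[1,+\infty)\times\R}x\,\Pi(\ud x,\ud z),0)^{\top}$. Summing these three pieces reproduces precisely the expression for $B_t$ in \eqref{eq:char}.

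For the converse, the strategy is to construct, on a possibly enlarged filtered probability space, a two-dimensional Brownian motion $B$ and Poisson random measures $N_0,N_1$ with the compensators required by \eqref{eq:dawsonli} and such that $(X,Z)$ solves \eqref{eq:dawsonli}; the claim then follows from the converse part of Theorem~\ref{thm:weakequivalencecbitcl}. The Brownian motion is obtained from the continuous martingale part of $(X,Z)$ by applying \cite[Theorem II.7.1']{IW89}, using that its quadratic variation equals $\int_0^{\cdot}X_{s^-}\sigma^2\,\ud s$ and that $\sigma$ can be chosen with non-negative diagonal entries satisfying $\sigma\sigma^{\top}=\sigma^2$. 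For the jumps, the compensator decomposes additively as $\nu(\ud x)\delta_0(\ud z)\,\ud s+X_{s^-}\Pi(\ud x,\ud z)\,\ud s$; after attaching an independent Bernoulli mark at each jump time to resolve any overlap at $\{z=0\}$, the two resulting jump components are mutually singular. The first component yields, via \cite[Theorem II.7.4]{IW89}, a Poisson random measure $N_0$ with compensator $\ud t\,\nu(\ud x)\,\delta_0(\ud z)$. The second component has the state-dependent compensator $X_{s^-}\Pi(\ud x,\ud z)\,\ud s$; this is flattened to the state-independent form $\ud t\,\ud u\,\Pi(\ud x,\ud z)$ by lifting each jump $(s,x,z)$ with an auxiliary uniform variable $U$ on $[0,X_{s^-}]$, again using \cite[Theorem II.7.4]{IW89}. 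This is the same randomization device that underlies the Dawson--Li construction for CBI processes in \cite{DL06}. By construction, $(X,Z)$ then satisfies \eqref{eq:dawsonli}, and Theorem~\ref{thm:weakequivalencecbitcl} delivers the equivalence to a $\uCBITCL(X_0,\Psi,\Lambda)$.

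The principal technical obstacle lies in the converse direction: starting only from the single state-dependent jump measure of $(X,Z)$, one must manufacture two independent Poisson random measures $N_0$ and $N_1$ whose compensators are state-independent in the precise form required by \eqref{eq:dawsonli}. The delicate step is the uniform lifting used to produce $N_1$, which must be carried out consistently along the jump times and must be compatible with $N_0$, so that the two measures remain independent in the enlarged filtration. These manipulations are classical, but they require, as already in the proof of Theorem~\ref{thm:weakequivalencecbitcl}, a careful enlargement of the underlying probability space.
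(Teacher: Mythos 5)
Your proposal is correct and follows essentially the same route as the paper: both directions are reduced to the extended Dawson--Li representation \eqref{eq:dawsonli} via Theorem~\ref{thm:weakequivalencecbitcl}, with the characteristics read off directly in the forward direction and the driving Brownian motion and Poisson random measures reconstructed via the Ikeda--Watanabe representation theorems in the converse. You in fact supply more detail than the paper on splitting the jump measure into the $N_0$ and $N_1$ parts (the paper dispatches this with ``arguing as in the first part of the proof of Theorem~\ref{thm:weakequivalencecbitcl}''), though the thinning probability there must depend on the jump location rather than being a plain independent Bernoulli mark.
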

\begin{proof}
Let $(X,Z)$ be a $\uCBITCL(X_0, \Psi, \Lambda)$ on $(\Omega,\cF,\PP)$. By Theorem \ref{thm:weakequivalencecbitcl}, the process $(X,Z)$ is solution to equation \eqref{eq:dawsonli} on an extended filtered probability space $(\Omega',\cF',\FF',\PP')$ (see \cite[Definition II.7.1]{IW89}). This implies that $(X,Z)$ is a quasi-left-continuous semimartingale on $(\Omega',\cF',\FF',\PP')$ and its differential characteristics as stated in \eqref{eq:char} can be directly deduced from \eqref{eq:dawsonli}. 
In view of \cite[Remark 10.40]{jacod79}, $(X,Z)$ is also a semimartingale on $(\Omega,\cF,\FF^{(X,Z)},\PP)$, where $\FF^{(X,Z)}=(\cF^{(X,Z)}_t)_{t\geq0}$ denotes the natural filtration of $(X,Z)$, with the same differential characteristics given in \eqref{eq:char}.

Conversely, suppose that $(X,Z)$ is a semimartingale with differential characteristics given as in \eqref{eq:char}. Writing the canonical representation of $(X,Z)$ (which is determined by the differential characteristics, see \cite[Theorem II.2.34]{JS03}) and arguing as in the first part of the proof of Theorem \ref{thm:weakequivalencecbitcl}, it can be easily shown that $(X,Z)$ satisfies \eqref{eq:dawsonli} on a suitable extension of the probability space. Therefore, it follows from Theorem \ref{thm:weakequivalencecbitcl} that $(X,Z)$ is equivalent to a $\uCBITCL(X_0, \Psi, \Lambda)$.
\end{proof}

\subsection{CBITCL processes as affine processes}	\label{sec:affine}

In this section, we regard CBITCL processes as affine processes in the sense of \cite{DFS03}. More specifically, we connect CBITCL processes to affine stochastic volatility processes as considered in \cite{KR11}.
We first state the following proposition, which provides the Fourier--Laplace transform of the joint process $(X,Y,Z)$, where $Y:=\int_0^{\cdot}{X_s\,\ud s}$.
When restricted to the pair $(X,Z)$, this result follows as a consequence of \cite[Theorem 1]{CGB17}.
Without loss of generality (see Theorem \ref{thm:weakequivalencecbitcl}), we suppose that the CBITCL process $(X,Z)$ is given by its extended Dawson-Li representation on a filtered probability space $(\Omega,\cF,\FF,\PP)$ supporting the processes introduced at the beginning of Section \ref{sec:integral_equation}.

\begin{prop}\label{prop:cbitclaffine}  
Let $(X, Z)$ be a $\uCBITCL(X_0, \Psi, \Lambda)$ and consider the  process $(X, Y, Z)$, where $Y:=\int_0^{\cdot} {X_s\,\ud s}$. Then, $(X,Z)$ and $(X, Y, Z)$ are affine procesess on the state spaces $\R_+\times\R$ and $\R_+^2\times\R$, respectively, and it holds that
\be\label{eq:cbitclsemigroup}
\EE\bigl[e^{u_1X_T + u_2Y_T + u_3Z_T}\big|\cF_t\bigr] 
= \exp\bigl(\cU(T-t,u_1, u_2, u_3) + \cV(T-t, u_1, u_2, u_3)X_t + u_2Y_t + u_3Z_t\bigr), 
\ee
for all $(u_1, u_2, u_3) \in \C_-^2\times\im\R$ and $0 \leq t \leq T < +\infty$, where the functions $\cU(\cdot, u_1, u_2, u_3):\R_+\rightarrow\C$ and $\cV(\cdot, u_1, u_2, u_3):\R_+\rightarrow\C_-$ are solutions to
\begin{align}
\cU(t, u_1, u_2, u_3) &= \int_0^{t} {\Psi\bigl(\cV(s, u_1, u_2, u_3)\bigr)\,\ud s},\label{eq:cbitclriccati1}\\
\frac{\partial \cV}{\partial t}(t, u_1, u_2, u_3) &= \Lambda\bigl(\cV(t, u_1, u_2, u_3),u_3\bigr) + u_2, \qquad \cV(0, u_1, u_2, u_3) = u_1,\label{eq:cbitclriccati2}
\end{align}	
where $\Psi:\C_-\rightarrow\C$ and $\Lambda:\C_-\times\im\R\rightarrow\C$ denote the functions given by \eqref{eq:immigrationmechanism} and \eqref{eq:levyexp}, respectively.
\end{prop}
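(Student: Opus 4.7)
The plan is to derive \eqref{eq:cbitclsemigroup} by an It\^o calculus argument; the affine property of both $(X,Z)$ and $(X,Y,Z)$ then follows immediately from the exponential-affine form of the right-hand side (setting $u_2=0$ recovers $(X,Z)$). Without loss of generality I work on an extended probability space where $(X,Z)$ admits its Dawson-Li representation \eqref{eq:dawsonli} (Theorem \ref{thm:weakequivalencecbitcl}); the third component $Y_t=\int_0^t X_s\,\ud s$ adds only the trivial finite-variation dynamics $\ud Y_t = X_{t^-}\,\ud t$. First I would address the existence of a solution $\cV(\cdot,u_1,u_2,u_3)$ to \eqref{eq:cbitclriccati2} taking values in $\C_-$ on $[0,T]$ for every $(u_1,u_2,u_3)\in\C_-^2\times\im\R$, with $\cU$ then defined by integration via \eqref{eq:cbitclriccati1}. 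Local existence and uniqueness follow from the analyticity of $v\mapsto\Lambda(v,u_3)$ on $\C_-$; global existence on $[0,T]$ together with the invariance $\Re\cV\leq 0$ is standard in the theory of affine processes on $\R_+\times\R$, being a consequence of the admissibility of the parameters $(\beta,b,\sigma^2,\nu,\Pi)$ in the sense of \cite{DFS03}. Moreover, $\Re\Psi(v)\leq 0$ on $\C_-$ yields $\Re\cU\leq 0$.

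Next I would apply the complex It\^o formula to the candidate
\[
F_t := \exp\bigl(\cU(T-t,u) + \cV(T-t,u)\,X_t + u_2 Y_t + u_3 Z_t\bigr), \qquad t\in[0,T],
\]
using the semimartingale differential characteristics of $(X,Y,Z)$ read off from Proposition \ref{prop:cbitclsemimartingale}. Collecting the drift, the constant-in-$X$ part equals $F_{t^-}\bigl(-\partial_s\cU + \Psi(\cV)\bigr)$ and the part linear in $X_{t^-}$ equals $F_{t^-}\bigl(-\partial_s\cV + \Lambda(\cV,u_3) + u_2\bigr)$, with derivatives evaluated at $T-t$; both vanish on account of \eqref{eq:cbitclriccati1}--\eqref{eq:cbitclriccati2}, so $F$ is a local martingale. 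Since $\Re\cV\leq 0$, $\Re u_2\leq 0$, $u_3\in\im\R$ and $\Re\cU\leq 0$, one has $|F_t|\leq 1$, so $F$ is in fact a bounded true martingale, and $\EE[F_T\mid\cF_t]=F_t$ reduces to \eqref{eq:cbitclsemigroup}.

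The main technical obstacle is the global $\C_-$-valuedness of $\cV$ on $[0,T]$: the quadratic term $\tfrac{1}{2}\sigma_X^2v^2$ and the cross term $\rho\,\sigma_X\sigma_Z\,v u_3$ in $\Lambda(v,u_3)$ prevent a naive pointwise sign check of $\Re\Lambda$ on the boundary $\Re v=0$, so invariance of $\C_-$ under the Riccati flow has to be argued via the affine admissibility of the coefficients rather than by a direct algebraic inequality. Everything else---the It\^o computation, the identification of the Riccati system by matching constant and $X_{t^-}$-coefficients in the drift, and the local-to-true martingale upgrade via boundedness---is a routine consequence of the standard affine-process machinery, and the explicit form \eqref{eq:cbitclsemigroup} together with the separation of the $u_2 Y_t + u_3 Z_t$ term on the right-hand side yields the affine property of $(X,Y,Z)$ on $\R_+^2\times\R$.
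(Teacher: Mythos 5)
Your proof is correct in substance but follows a genuinely different route from the paper. The paper's proof is a short citation chain: it uses the semimartingale characteristics \eqref{eq:char} together with \cite[Theorem 2.12]{DFS03} to conclude that $(X,Z)$ is affine with functional characteristics $(F,R)=(\Psi,\Lambda)$, then invokes \cite[Theorem 4.10]{phdthesis_kr} to append the integrated component $Y$ (which simply adds $u_2$ to $R$), and finally reads off \eqref{eq:cbitclsemigroup} and the Riccati system \eqref{eq:cbitclriccati1}--\eqref{eq:cbitclriccati2} from \cite[Theorem 2.7]{DFS03}. You instead run the classical verification argument: It\^o's formula applied to the candidate exponential-affine process, cancellation of the drift via the Riccati equations, and the upgrade from local to true martingale via the bound $|F_t|\leq 1$ (which is indeed valid, since $\Re\Psi\leq 0$ on $\C_-$ gives $\Re\,\cU\leq 0$, and $\Re\,\cV\leq 0$, $\Re u_2\leq 0$, $u_3\in\im\R$ handle the remaining terms on the state space $\R_+^2\times\R$). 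What your approach buys is a self-contained and transparent derivation of \eqref{eq:cbitclsemigroup}; what it costs is that the crux you correctly identify --- global existence of $\cV$ on $[0,T]$ with values in $\C_-$ --- still has to be imported from the admissibility analysis of \cite{DFS03}, which is essentially the same external input the paper uses, so the gain in self-containedness is partial. Two small points you should make explicit: (i) the claim that $(X,Z)$ and $(X,Y,Z)$ are \emph{affine processes} requires the Markov property in addition to the exponential-affine form of the conditional transform; the paper obtains this from strong uniqueness of the Dawson-Li equation \eqref{eq:dawsonli}, and you should say the same since you are already working in that representation; (ii) in the drift-matching step one should check integrability of the compensated jump integrals of $e^{\cV x+u_3 z}-1$ (routine, using $\Re\,\cV\leq 0$, $x\geq 0$ and the moment conditions on $\nu$ and $\Pi$) so that the local-martingale claim is legitimate before invoking boundedness.
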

\begin{proof}
Since $(X,Z)$ is the unique strong solution to \eqref{eq:dawsonli} on $(\Omega,\cF,\FF,\PP)$, it is a strong Markov process and a semimartingale with characteristics given by \eqref{eq:char} (see Proposition \ref{prop:cbitclsemimartingale}). By \cite[Theorem 2.12]{DFS03}, it follows that $(X,Z)$ is an affine process on $\R_+\times\R$ with functional characteristics $(F,R)$  given by 
\[
F(v_1, v_2) = \Psi(v_1),\qquad 
R(v_1, v_2) = \Lambda(v_1, v_2), 
\]
for all $(v_1, v_2) \in \C_-\times\im\R$.
By applying \cite[Theorem 4.10]{phdthesis_kr}, we obtain that the process $(X,Y,Z)$ is an affine process on $\R_+^2\times\R$ with functional characteristics $(\widetilde{F}, \widetilde{R})$ given by
\[
\widetilde{F}(u_1, u_2, u_3) = F(u_1, u_3) = \Psi(u_1),\qquad 
\widetilde{R}(u_1, u_2, u_3) = R(u_1, u_3) + u_2 = \Lambda(u_1,u_3) + u_2, 
\]
for all $(u_1, u_2, u_3)\in\C_-^2\times\im\R$.
The affine transform formula \eqref{eq:cbitclsemigroup} and the Riccati equations \eqref{eq:cbitclriccati1}-\eqref{eq:cbitclriccati2} are then a consequence of \cite[Theorem 2.7]{DFS03}.
\end{proof}

The availability of the explicit characterization of the conditional Fourier-Laplace transform stated in Proposition \ref{prop:cbitclaffine} is of great usefulness for the application of CBITCL processes in finance (see also Section \ref{sec:pricing}). More specifically, many pricing applications require the computation of conditional expectations of the form \eqref{eq:cbitclsemigroup}, with $Y$ typically playing the role of a discount factor. 

In line with \cite{KR11}, we say that a process $(X,Z)$ taking values in $\R_+\times\R$ is an {\em affine stochastic volatility process} if it is an affine process and its Fourier-Laplace transform has the structure \eqref{eq:cbitclsemigroup} (with $u_2=0$), for some functions $\cU$ and $\cV$. This terminology is explained by the fact that, in financial applications, the process $Z$ usually plays the role of the log-price process of a risky asset, while $X$ represents its instantaneous variance.\footnote{We point out that the notion of affine stochastic volatility process that we adopt in this paper is more general than \cite[Definition 2.8]{KR11}. Indeed, we do not require that $\exp(Z)$ is a martingale nor impose a non-degeneracy condition on $\Xi$ (corresponding to conditions (A3)-(A4) in \cite{KR11}). An explicit characterization of the martingale property of $\exp(Z)$ will be given below in Corollary \ref{cor:martingale}.} 
Proposition \ref{prop:cbitclaffine} directly implies that CBITCL processes belong to the class of affine stochastic volatility processes. In the next proposition, we provide conditions for the validity of the converse implication. 
We recall that, if $(X,Z)$ is an affine process on $\R_+\times\R$, then the compensator of its jump measure is of the form $\nu^{(X,Z)}(\ud t,\ud x,\ud z)=(m_0(\ud x,\ud z)+X_{t-}m_1(\ud x,\ud z))\ud t$, where $m_0$ and $m_1$ are L\'evy measures on $\R_+\times\R$ satisfying $\int_{\R_+\times\R}(1\wedge(x+z^2))m_0(\ud x,\ud z)<+\infty$ (see, e.g., \cite[Section 2.1]{KR11}).

\begin{prop}	\label{prop:ASVP}
Let $(X,Z)$ be an affine stochastic volatility process. Then, $(X,Z)$ is equivalent to a CBITCL process if the following three conditions hold:
\begin{enumerate}
\item[(i)] $\int_{[1,+\infty)\times\R}x\,m_1(\ud x,\ud z)<+\infty$;
\item[(ii)] $m_0(\ud x, \ud z) = m_0^X(\ud x)\,\delta_0(\ud z) + \delta_0(\ud x)\,m_0^Z(\ud z)$;
\item[(iii)] $Z$ is a.s. constant on every interval $[r,s]\subseteq\R_+$ such that $\int_r^sX_{u-}\ud u=0$ a.s.
\end{enumerate}
\end{prop}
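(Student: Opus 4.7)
The strategy is to identify the differential characteristics of $(X,Z)$ using the affine structure, deduce from (i)--(iii) that they coincide with the form \eqref{eq:char}, and then conclude by invoking the converse direction of Proposition \ref{prop:cbitclsemimartingale}.

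First, since $(X,Z)$ is an affine process on $\R_+\times\R$, the admissibility conditions of \cite{DFS03} force the differential characteristics to take the affine form $B_t=\mu_0+X_{t^-}\mu_1$, $C_t=\Sigma_0+X_{t^-}\Sigma_1$, $F_t(\ud x,\ud z)=m_0(\ud x,\ud z)+X_{t^-}m_1(\ud x,\ud z)$, where $\Sigma_0=\bigl(\begin{smallmatrix}0 & 0\\ 0 & \sigma_{Z,0}^2\end{smallmatrix}\bigr)$ (the $X$-diagonal entry of the state-independent diffusion must vanish, and hence by positive semidefiniteness so must the off-diagonal entries, since $X\geq0$), while $\Sigma_1$ has the form of $\sigma^2$ in \eqref{eq:triplet} for some $\sigma_X,\sigma_Z\geq0$ and $\rho\in[-1,1]$. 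In particular, the decomposition \eqref{eq:stochtimechangeequation}--\eqref{eq:dawsonli} targeted by Proposition \ref{prop:cbitclsemimartingale} corresponds to the specialization where $\mu_{0,Z}=0$, $\sigma_{Z,0}^2=0$ and $m_0$ is supported on the $x$-axis.

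I would then use the three conditions in turn. Condition (i) yields exactly the integrability on $[1,+\infty)\times\R$ of $m_1$ required to define the L\'evy exponent $\Lambda$ in \eqref{eq:levyexp}. Condition (ii) provides the decomposition $m_0=m_0^X(\ud x)\,\delta_0(\ud z)+\delta_0(\ud x)\,m_0^Z(\ud z)$, which separates the state-independent jumps of $X$ from those of $Z$. The key step is then to apply condition (iii) to eliminate the \emph{state-independent} contributions to $Z$, namely $\mu_{0,Z}$, $\sigma_{Z,0}^2$, and $m_0^Z$. The idea is that on any interval $[r,s]$ on which $\int_r^s X_{u^-}\,\ud u=0$ a.s., the integrand $X_{t^-}$ vanishes Lebesgue-a.e. on $[r,s]$, so the restriction of $Z$ to $[r,s]$ is a L\'evy-type semimartingale driven solely by the triplet $(\mu_{0,Z},\sigma_{Z,0}^2,m_0^Z)$; if any of these were non-trivial, $Z$ would accumulate non-vanishing drift, diffusion or jumps on $[r,s]$, contradicting the assertion in (iii) that $Z$ is a.s.\ constant there.

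Once the vanishings above are established, setting $\nu:=m_0^X$, $\beta:=\mu_{0,X}-\int_0^1 x\,\nu(\ud x)$, $\Pi:=m_1$, and reading off $b_X,b_Z$ from $\mu_1$ and $\sigma_X,\sigma_Z,\rho$ from $\Sigma_1$, the differential characteristics of $(X,Z)$ match the template \eqref{eq:char}; affine admissibility on $\R_+\times\R$ guarantees $\beta\geq 0$ together with the L\'evy measure properties of $\nu$ and $\Pi$ and the integrability $\int_0^1 x\,\nu(\ud x)<+\infty$. The converse direction of Proposition \ref{prop:cbitclsemimartingale} then implies that $(X,Z)$ is equivalent to a $\uCBITCL(X_0,\Psi,\Lambda)$ with $\Psi$ and $\Lambda$ given by \eqref{eq:immigrationmechanism}--\eqref{eq:levyexp}. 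The main technical difficulty is the rigorous execution of the step based on (iii): one must ensure that a suitable interval $[r,s]$ with $\int_r^s X_{u^-}\,\ud u=0$ a.s.\ is available to activate the constraint, which in the degenerate case when no such (deterministic) interval exists should be handled by appealing to the strong Markov / time-homogeneity structure of the affine process and examining its trajectories at the hitting time of $\{X=0\}$ (or starting the affine family at $X_0=0$) in order to transfer the desired vanishing conclusion back to the process of interest.
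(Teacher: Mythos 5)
Your proposal is correct and follows essentially the same route as the paper: read off the affine differential characteristics from \cite[Theorem 2.12]{DFS03}, use (iii) to kill the state-independent drift, diffusion and (via (ii)) jump contributions to $Z$, use (i) for the integrability needed in \eqref{eq:levyexp}, and conclude via the converse part of Proposition \ref{prop:cbitclsemimartingale}. The technical point you flag about the availability of an interval with $\int_r^s X_{u-}\,\ud u=0$ is not addressed in the paper's proof either, and your suggested resolution (considering the affine family started at $X_0=0$) is the natural one.
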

\begin{proof}
If $(X,Z)$ is an affine stochastic volatility process, then by \cite[Theorem 2.12]{DFS03} it is a semimartingale with differential characteristics $(B,C,F)$ given by
\[
B_t=\begin{pmatrix}
\beta_1 \\ \beta_2
\end{pmatrix}
+ X_{t-}\begin{pmatrix}
b_1 \\ b_2
\end{pmatrix},
\qquad
C_t=\begin{pmatrix}
0 & 0 
\\ 0 & \alpha_2
\end{pmatrix}
+ X_{t-}\begin{pmatrix}
a_{11} & a_{12} 
\\ a_{21} & a_{22}
\end{pmatrix},
\]
\[
F_t(\ud x,\ud z) = m_0(\ud x,\ud z)+X_{t-}m_1(\ud x,\ud z),
\]
where $(\beta_1,\beta_2)\in\R_+\times\R$, $(b_1,b_2)\in\R^2$, $\alpha_2\in\R_+$ and $\bigl(\begin{smallmatrix}
a_{11} & a_{12} 
\\ a_{21} & a_{22}
\end{smallmatrix}\bigr)$ is a symmetric positive semi-definite matrix. 
By condition (iii), the process $Z$ is constant on all intervals $[r,s]\subseteq\R_+$ such that $X_{u-}=0$ a.s. for a.e. $u\in[r,s]$, which implies that $\beta_2=0$ and $\alpha_2=0$. Moreover, conditions (ii) and (iii) together imply that $m_0^Z=0$. Using condition (i) and recalling that $\int_0^1x\,m^X_0(\ud x)<+\infty$, since $(X,Z)$ is assumed to be an affine stochastic volatility process, we have thus shown that $(B,C,F)$ can be written in the form \eqref{eq:char}, where the L\'evy measures  $\nu(\ud x)$ and $\Pi(\ud x, \ud z)$ correspond to $m_0^X(\ud x)$ and $m_1(\ud x, \ud z)$, respectively. 
The claim then follows by Proposition \ref{prop:cbitclsemimartingale}.
\end{proof}

In the remaining sections of the paper, we shall restrict our attention to CBITCL processes satisfying the following assumption on the dependence structure of the L\'evy process $(M,N)$.

\begin{ass}\label{ass:no_common_jumps}
The L\'evy measure $\Pi$ is of the form 
\be\label{eq:levymeasure}
\Pi(\ud x, \ud z) = \pi(\ud x)\,\delta_0(\ud z) + \delta_0(\ud x)\,\gamma(\ud z),
\ee
where $\pi$ is a L\'evy measure on $\R_+$ such that $\int_1^{+\infty} {x\,\pi(\ud x)} < +\infty$ and $\gamma$ is a L\'evy measure on $\R$.\\
\end{ass}

Assumption \ref{ass:no_common_jumps} amounts to excluding common jumps in the processes $X$ and $Z$, while allowing for non-zero correlation in their continuous local martingale parts. This assumption enables us to derive explicit results in the following sections, in particular an explicit characterization of the lifetime of exponential moments. Such explicit results would be impossible to obtain under a general L\'evy measure, similarly to the case of general affine stochastic volatility processes (see \cite{KR11}).

Under Assumption \ref{ass:no_common_jumps}, the L\'evy exponent $\Lambda$ given in \eqref{eq:levyexp} takes the following simpler form:
\[
\Lambda(u_1, u_2) = \Phi(u_1) + \rho\,\sigma_X\sigma_Z\,u_1 u_2 + \Xi(u_2), 
\qquad \text{for all } (u_1, u_2) \in \C_-\times\im\R,
\]
where $\Phi$ and $\Xi$ are given by
\begin{align}
\Phi(u_1) &= -b_X u_1 +  \frac{1}{2}\sigma_X^2u_1^2 + \int_{\R_+}{(e^{u_1 x} - 1 - u_1 x)\pi(\ud x)}, 
\qquad \text{ for all } u_1 \in\C_-,
\label{eq:newbranching}\\
\Xi(u_2) &= b_Z u_2 + \frac{1}{2}\sigma_Z^2u_2^2 + \int_{\R}{(e^{u_2 z} - 1 - u_2 z\ind_{\{|z|<1\}})\gamma(\ud z)}, 
\qquad \text{ for all } u_2 \in \im\R.
\label{eq:newxi}
\end{align}

In the following, we shall write that a CBITCL process $(X,Z)$ is a $\uCBITCL(X_0, \Psi, \Phi, \Xi, \rho)$ if Assumption \ref{ass:no_common_jumps} holds, with $\Psi$, $\Phi$, $\Xi$ respectively given by \eqref{eq:immigrationmechanism}, \eqref{eq:newbranching}, \eqref{eq:newxi} and $\rho\in[-1,1]$.

For later use, we note that under Assumption \ref{ass:no_common_jumps} the stochastic integral equation \eqref{eq:dawsonli} takes the following form:
\begin{align}
X_t &= X_0 + \int_0^t {( \beta - b_XX_s )\,\ud s} + \sigma_X\int_0^t {\sqrt{X_s}\,\ud B_s^X} + \int_0^t \int_0^{+\infty} {x\,N_0(\ud s, \ud x)}\notag\\
&\quad + \int_0^t \int_0^{X_{s-}}\! \int_0^{+\infty} {x\,\tildeN_1(\ud s, \ud u, \ud x)},\label{eq:cbitclsde1}\\
Z_t &= b_Z\int_0^t {X_s\,\ud s} + \sigma_Z\int_0^t {\sqrt{X_s}\,\ud B_s^Z} + \int_0^t \int_0^{X_{s-}}\!\int_{|z| \geq 1} {z\,N_2(\ud s, \ud u, \ud z)}\notag\\
&\quad + \int_0^t \int_0^{X_{s-}}\!\int_{|z| < 1} {z\,\tildeN_2(\ud s, \ud u, \ud z)},\label{eq:cbitclsde2}
\end{align}
where $B^X=(B_t^X)_{t \geq 0}$ and $B^Z=(B_t^Z)_{t \geq 0}$ are one-dimensional Brownian motions with correlation $\rho$, $N_0(\ud t,\ud x)$ is a Poisson random measure on $(0,+\infty)\times\R_+$ with compensator $\ud t\,\nu(\ud x)$, $N_1(\ud t,\ud u,\ud x)$ is a Poisson random measure on $(0,+\infty)\times\R_+^2$ with compensator $\ud t\,\ud u\,\pi(\ud x)$ and $N_2(\ud t,\ud x,\ud z)$ is a Poisson random measure on $(0,+\infty)\times\R_+\times\R$ with compensator $\ud t\,\ud u\,\gamma(\ud z)$. Moreover, the random measures $N_0$, $N_1$ and $N_2$ are mutually independent and also independent of the Brownian motions $B^X$ and $B^Z$.

\section{Finiteness of exponential moments and asymptotic behavior}\label{sec:moments}

In this section, assuming the validity of Assumption \ref{ass:no_common_jumps}, we study the existence of (discounted) exponential moments of CBITCL processes and we characterize their asymptotic behavior. These properties are intimately connected to the maximal lifetime of the solutions to the Riccati equations, explicitly characterized in Theorem \ref{thm:lifetimetheoremcbitcl} below.
We let $(X,Z)$ be a $\uCBITCL(X_0, \Psi, \Phi, \Xi, \rho)$ given by its extended Dawson-Li representation on a given filtered probability space $(\Omega,\cF,\FF,\PP)$.

\subsection{Finiteness of exponential moments}	\label{sec:exponentialmomentscbitcl}

In financial applications, the finiteness of (discounted) exponential moments often represents an indispensable requirement (see, e.g., \cite{FGS21,FGS22}). 
In order to make use of some results of \cite{KRM15} for general affine processes, let us define 
\be\label{eq:domaincbi}
\cD_X := \biggl\{ u \in \R : \int_1^{+\infty} {e^{u x}\,(\nu+\pi)(\ud x)} < +\infty \biggr\}.
\ee
The convex set $\cD_X$ is non-empty and represents the effective domain of the functions $\Psi$ and $\Phi$, which can be extended to finite-valued convex functions on $\cD_X$.
Similarly, let us define
\be\label{eq:domaincbitcl}
\cD_Z := \biggl\{ u \in \R : \int_{|z| \geq 1} {e^{u z}\,\gamma(\ud z)} < +\infty \biggr\},
\ee
which represents the effective domain of the L\'evy exponent $\Xi$ when restricted to real arguments. By standard results on exponential moments of L\'evy measures (see, e.g., \cite[Theorem 25.17]{S99}), the L\'evy exponent $\Xi$ can be extended as a finite-valued convex function on $\cD_Z$.

Adapting \cite[Definition 2.10]{KRM15} to the present setup, we introduce the following definition.

\begin{defn}\label{def:extendedcbitclriccatisystem}
For every $(u_1, u_2, u_3) \in \cD_X\times\R\times\cD_Z$, we say that $(\cU(\cdot, u_1, u_2, u_3), \cV(\cdot,u_1, u_2, u_3))$ is a solution to the \emph{extended Riccati system} if it solves the following system:
\begin{align}
\cU(t, u_1, u_2, u_3) &= \int_0^{t} \Psi\bigl(\cV(s, u_1, u_2, u_3)\bigr)\ud s,\label{eq:extendedcbitclode_0}\\
\frac{\partial \cV}{\partial t}(t, u_1, u_2, u_3) &= \Phi\bigl(\cV(t,u_1,u_2, u_3)\bigr) + u_2 + \rho\,\sigma_X\,\sigma_Z\,u_3\,\cV(t,u_1,u_2, u_3) + \Xi(u_3),\label{eq:extendedcbitclode}\\
\cV(0, u_1,u_2, u_3) &= u_1,\notag
\end{align}
up to a time $\uT^{(u_1, u_2, u_3)} \in [0, +\infty]$, with $\uT^{(u_1, u_2, u_3)}$ denoting the joint lifetime of the functions $\cU(\cdot, u_1, u_2, u_3):[0, \uT^{(u_1, u_2, u_3)})\rightarrow\R$ and $\cV(\cdot, u_1, u_2, u_3):[0, \uT^{(u_1, u_2, u_3)})\rightarrow\cD_X$.  
\end{defn}

Definition \ref{def:extendedcbitclriccatisystem} extends the Riccati system \eqref{eq:cbitclriccati1}-\eqref{eq:cbitclriccati2} by allowing for the possibility of explosion in finite time. In some situations, which will be precisely characterized in Theorem \ref{thm:lifetimetheoremcbitcl} below, the lifetime $\uT^{(u_1, u_2, u_3)}$ turns out to be infinite, in which case \eqref{eq:extendedcbitclode_0}-\eqref{eq:extendedcbitclode} admit a global solution.

It is well known that the branching mechanism $\Phi$ is a locally Lipschitz continuous function on the interior $\cD_X^{\circ}$ of the set $\cD_X$, but it may fail to be so at the boundary $\partial \cD_X$. Therefore, a solution $\cV(\cdot, u_1, u_2, u_3)$ to the ODE \eqref{eq:extendedcbitclode} may not be unique when it starts at $\partial\cD_X$ or reaches it at a later time. This observation motivates the introduction of the concept of minimal solution in \cite{KRM15}. In our setup, for the sake of tractability, we prefer to impose an additional mild technical assumption which guarantees uniqueness of the (local) solution to the ODE \eqref{eq:extendedcbitclode} for every $(u_1, u_2, u_3) \in \cD_X\times\R\times\cD_Z$. To this effect, we define
\be \label{eq:d1intervalcbi}
\psi := \sup\bigl\{ x \geq 0 : \Psi(x) < +\infty \bigr\} 
\qquad \text{and} \qquad 
\phi := \sup\bigl\{ x \geq 0 : \Phi(x) < +\infty \bigr\}.
\ee
Since $\cD_X$ is a convex set containing $\R_-$, it can be written as $\cD_X=(-\infty, \psi\wedge\phi)$, or $(-\infty, \psi \wedge \phi]$ when $\Psi(\psi \wedge \phi) \vee \Phi(\psi \wedge \phi) < +\infty$ (which is equivalent to $\int_1^{+\infty} {e^{(\psi \wedge \phi)x}(\nu+\pi)(\ud x)} < +\infty$). The function $\Phi$ is a finite-valued convex function on $\cD_X$ and, hence, differentiable a.e. on $\cD_X^{\circ}$, with
\be\label{eq:derivativebranchingcbi}
\Phi^{\prime}(u) = -b_X + \sigma_X^2 u + \int_0^{+\infty} {x(e^{u x}-1)\pi(\ud x)},
\qquad \text{ for all } u \in \cD_X^{\circ}.
\ee
If $\psi \wedge \phi = +\infty$, then $\cD_X=\R$, in which case $\Phi \in \cC^1(\R,\R)$ obviously holds. If $\psi \wedge \phi < +\infty$, the following assumption ensures that $\Phi^{\prime}(\psi \wedge \phi) < +\infty$, which in turn implies that $\Phi \in \cC^1(\cD_X,\R)$.

\begin{ass}\label{ass:lipschitzcbi}
If $\psi \wedge \phi < +\infty$, then $\int_1^{+\infty} {x\,e^{(\psi \wedge \phi)x}\pi(\ud x)} < +\infty$.
\end{ass}

The validity of Assumption \ref{ass:lipschitzcbi} can be easily verified for specific types of CBITCL process. Moreover, as illustrated in Section \ref{sec:examples}, it is satisfied by a large class of models.
Under Assumption \ref{ass:lipschitzcbi}, there exists a unique solution $(\cU(\cdot, u_1, u_2, u_3), \cV(\cdot,u_1, u_2, u_3))$ to the extended Riccati system \eqref{eq:extendedcbitclode_0}-\eqref{eq:extendedcbitclode} up to time $\uT^{(u_1,u_2,u_3)}$, for all $(u_1, u_2, u_3) \in \cD_X\times\R\times\cD_Z$. This enables us to state the following result, which extends Proposition \ref{prop:cbitclaffine} and follows directly from \cite[Theorem 2.14]{KRM15}.

\begin{lem}\label{lem:extendedlaplacefouriercbitcl}
Let $(X, Z)$ be a $\uCBITCL(X_0, \Psi, \Phi, \Xi, \rho)$ and $Y:=\int_0^{\cdot} {X_s\,\ud s}$. Suppose that Assumption \ref{ass:lipschitzcbi} holds. Then, for all $(u_1, u_2, u_3) \in \cD_X\times\R\times\cD_Z$ and $0 \leq t \leq T < \uT^{(u_1, u_2, u_3)}$, it holds that
\be\label{eq:extendedcbitclsemigroup}
\EE\bigl[e^{u_1X_T + u_2Y_T + u_3Z_T}\bigr|\cF_t\bigr] = \exp\bigl( \cU(T-t, u_1, u_2, u_3) + \cV(T-t, u_1, u_2, u_3)X_t + u_2Y_t + u_3Z_t \bigr),  
\ee
where $(\cU(\cdot, u_1, u_2, u_3), \cV(\cdot,u_1, u_2, u_3))$ is the unique solution to the extended Riccati system \eqref{eq:extendedcbitclode_0}-\eqref{eq:extendedcbitclode} defined up to time $\uT^{(u_1, u_2, u_3)}$.
\end{lem}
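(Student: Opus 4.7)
The plan is to apply \cite[Theorem 2.14]{KRM15} to the three-dimensional affine process $(X,Y,Z)$, whose affine property on $\R_+^2\times\R$ is already guaranteed by Proposition \ref{prop:cbitclaffine}. That proposition also records the functional characteristics explicitly: $\widetilde F$ depends only on $u_1$ through $\Psi$, while under Assumption \ref{ass:no_common_jumps} the relevant coordinate of $\widetilde R$ splits as $\widetilde R_1(u_1,u_2,u_3)=\Phi(u_1)+\rho\,\sigma_X\sigma_Z\,u_1u_3+\Xi(u_3)+u_2$. The natural real effective domains in the $u_1$- and $u_3$-directions are exactly the sets $\cD_X$ and $\cD_Z$ defined in \eqref{eq:domaincbi}--\eqref{eq:domaincbitcl}, whereas the $u_2$-direction is unconstrained since $\widetilde F$ does not depend on $u_2$.

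First I would check that the generic three-dimensional Riccati system for $(X,Y,Z)$ reduces to the two-equation system \eqref{eq:extendedcbitclode_0}--\eqref{eq:extendedcbitclode}. Because $\widetilde F$ is independent of $u_2$ and $u_3$, and $\widetilde R$ is affine in $u_2$ with unit coefficient and does not involve any state coupling through $u_3$, the second and third components of the Riccati flow remain frozen at their initial values, i.e.\ $\psi_2(t,u)\equiv u_2$ and $\psi_3(t,u)\equiv u_3$. What remains is precisely the non-trivial ODE \eqref{eq:extendedcbitclode} for $\cV=\psi_1$ together with the closed-form integral expression \eqref{eq:extendedcbitclode_0} for $\cU$.

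The main obstacle is then to secure existence and uniqueness of a maximal $\cD_X$-valued solution $\cV(\cdot,u_1,u_2,u_3)$ starting from $u_1\in\cD_X$. On the interior $\cD_X^{\circ}$ the convex function $\Phi$ is locally Lipschitz, so classical ODE theory yields a unique local solution; the delicate case is when the initial datum sits on the boundary $\psi\wedge\phi$, or when the solution reaches it in finite time, where convexity alone does not prevent a loss of uniqueness (this is exactly the issue that \cite{KRM15} handles via their notion of minimal solution). In our setting Assumption \ref{ass:lipschitzcbi} sidesteps this entirely: it forces $\Phi'(\psi\wedge\phi)<+\infty$, hence $\Phi\in\cC^1(\cD_X,\R)$ up to the boundary, so the right-hand side of \eqref{eq:extendedcbitclode} becomes locally Lipschitz on all of $\cD_X$ and a standard ODE argument produces a unique maximal solution defined on $[0,\uT^{(u_1,u_2,u_3)})$.

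With the Riccati ODE well posed on $\cD_X$, the hypotheses of \cite[Theorem 2.14]{KRM15} are met and that theorem delivers the extended affine transform formula \eqref{eq:extendedcbitclsemigroup} on the claimed range $(u_1,u_2,u_3)\in\cD_X\times\R\times\cD_Z$ and $0\leq t\leq T<\uT^{(u_1,u_2,u_3)}$. In essence, the substance of the proof is the verification that Assumption \ref{ass:lipschitzcbi} upgrades the boundary regularity enough to invoke \cite[Theorem 2.14]{KRM15} as a black box; the rest is a routine identification of objects and the dimension reduction of the Riccati system made possible by the trivial evolution of $\psi_2$ and $\psi_3$.
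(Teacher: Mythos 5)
Your proposal is correct and follows essentially the same route as the paper: the authors also obtain the lemma as a direct application of \cite[Theorem 2.14]{KRM15}, after observing (in the discussion preceding the statement) that Assumption \ref{ass:lipschitzcbi} forces $\Phi'(\psi\wedge\phi)<+\infty$, hence $\Phi\in\cC^1(\cD_X,\R)$, which yields uniqueness of the maximal $\cD_X$-valued solution to \eqref{eq:extendedcbitclode} and dispenses with the minimal-solution machinery of \cite{KRM15}. Your additional verification that the $Y$- and $Z$-components of the Riccati flow stay frozen is the same dimension reduction already recorded in Proposition \ref{prop:cbitclaffine}.
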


The lifetime $\uT^{(u_1, u_2, u_3)}$ is closely connected to the finiteness of exponential moments of the process $(X,Y,Z)$. Indeed, in view of \cite[Proposition 3.3]{KRM15}, it holds that
\be\label{eq:characterizationmaximumjointlifetimecbitcl}
\uT^{(u_1, u_2, u_3)} = \sup\bigl\{ t \geq 0 : \EE[e^{u_1X_t + u_2Y_t + u_3Z_t}] < +\infty \bigr\}.
\ee

The next theorem is the main result of this section and provides an explicit formula for $\uT^{(u_1, u_2, u_3)}$, for every $(u_1, u_2, u_3)\in\cD_X\times\R\times\cD_Z$. The proof is based on techniques similar to \cite[Theorem 4.1]{KR11}, which covers the case of affine stochastic volatility processes. However, our theorem is specific to CBITCL processes and avoids the additional assumptions of \cite[Theorem 4.1]{KR11}. In particular, it allows for CBI processes $X$ with an arbitrary (not necessarily strictly subcritical, i.e. $b_X>0$) branching mechanism $\Phi$.
For $(u_2, u_3) \in \R\times\cD_Z$, we introduce the following notation:
\begin{align*}
\cS^{(u_2,u_3)} &:= \bigl\{ x \in \cD_X : \Phi(x) + u_2 + \rho\,\sigma_X\,\sigma_Z\,u_3\,x + \Xi(u_3) \leq 0 \bigr\},\\
\chi^{(u_2,u_3)} &:= \sup\cS^{(u_2,u_3)} \in [-\infty, \psi \wedge \phi],
\end{align*}
with the convention $\chi^{(u_2,u_3)} = -\infty$ if the set $\cS^{(u_2,u_3)}$ is empty. 

\begin{thm}\label{thm:lifetimetheoremcbitcl}
Let $(X, Z)$ be a $\uCBITCL(X_0, \Psi, \Phi, \Xi, \rho)$ and suppose that Assumption \ref{ass:lipschitzcbi} holds. Then, for every $(u_1, u_2, u_3) \in \cD_X\times\R\times\cD_Z$, the lifetime $\uT^{(u_1, u_2, u_3)}$ is given as follows:
\begin{itemize}
\item[(i)] if $u_1 \leq \chi^{(u_2,u_3)}$, then $\uT^{(u_1, u_2, u_3)} = +\infty$;
\item[(ii)] if $u_1 > \chi^{(u_2,u_3)}$, then 
\be\label{eq:explosionlifetimecbitcl}
\uT^{(u_1, u_2, u_3)} = \int_{u_1}^{\psi \wedge \phi} {\frac{\ud x}{\Phi(x) + u_2 + \rho\,\sigma_X\,\sigma_Z\,u_3\,x + \Xi(u_3)}}.
\ee
\end{itemize}
\end{thm}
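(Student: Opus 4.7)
The plan is to reduce the problem to the analysis of the autonomous scalar ODE $\dot\cV = G(\cV)$, where I set
\[
G(x) := \Phi(x) + u_2 + \rho\,\sigma_X\,\sigma_Z\,u_3\,x + \Xi(u_3),
\]
extracted from \eqref{eq:extendedcbitclode}. The first observation is that $G$ is convex on $\cD_X$: indeed, $\Phi$ is convex because each integrand $e^{ux}-1-ux$ in \eqref{eq:newbranching} is convex in $u$ for fixed $x\geq0$ and the quadratic term is convex, while the extra terms in $G$ are linear in the argument, hence preserve convexity. Under Assumption \ref{ass:lipschitzcbi}, $\Phi$, and therefore $G$, belongs to $\cC^1(\cD_X,\R)$, which secures local Lipschitz continuity and uniqueness of the Cauchy problem. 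Consequently the sublevel set $\cS^{(u_2,u_3)}=\{G\leq 0\}$ is a (possibly empty, possibly singleton) closed subinterval $[a,\chi^{(u_2,u_3)}]$ of $\cD_X$; whenever it is non-degenerate, $G(a)=G(\chi^{(u_2,u_3)})=0$ and $G$ is strictly positive on the complement of $\cS^{(u_2,u_3)}$ inside $\cD_X$.

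For case (i), $u_1\leq\chi^{(u_2,u_3)}$, I would distinguish two subcases. If $u_1\in[a,\chi^{(u_2,u_3)}]$, then $G(u_1)\leq 0$, so $\cV$ is non-increasing; since $a$ and $\chi^{(u_2,u_3)}$ are equilibria of the ODE, uniqueness forbids $\cV$ from crossing them, hence $\cV(t)\in[a,\chi^{(u_2,u_3)}]$ for all $t\geq 0$ and $\uT^{(u_1,u_2,u_3)}=+\infty$. If instead $u_1<a$, then $G(u_1)>0$ and $\cV$ is strictly increasing, but again uniqueness prevents it from reaching the equilibrium $a$ in finite time, so $\cV(t)\in[u_1,a)$ for all $t\geq 0$ and again $\uT^{(u_1,u_2,u_3)}=+\infty$. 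The degenerate case $\cS^{(u_2,u_3)}=\{\chi^{(u_2,u_3)}\}$ is handled identically with $a=\chi^{(u_2,u_3)}$.

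For case (ii), $u_1>\chi^{(u_2,u_3)}$, by definition of $\chi^{(u_2,u_3)}$ one has $G(u_1)>0$, and by convexity $G>0$ on the whole interval $(\chi^{(u_2,u_3)},\psi\wedge\phi)$. Thus $\cV$ is strictly increasing on its interval of existence, so I may divide \eqref{eq:extendedcbitclode} by $G(\cV)$ and integrate from $0$ to $t$, obtaining by the change of variables $x=\cV(s)$
\[
t = \int_{u_1}^{\cV(t)} \frac{\ud x}{G(x)}, \qquad 0 \leq t < \uT^{(u_1,u_2,u_3)}.
\]
Letting $t\uparrow\uT^{(u_1,u_2,u_3)}$, the monotone strictly increasing trajectory $\cV(t)$ must tend to $\psi\wedge\phi$, since otherwise it would remain in a compact subinterval of $\cD_X^\circ$ where $G$ is bounded away from zero and one could extend the solution further. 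Passing to the limit yields \eqref{eq:explosionlifetimecbitcl}.

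The main obstacle is handling the boundary carefully: one must ensure that in case (i) the solution cannot leave $\cD_X$ (which is exactly what uniqueness under Assumption \ref{ass:lipschitzcbi} buys, by preventing crossings of equilibria and excluding escape to $-\infty$ via the superlinear growth $\Phi(u)\to+\infty$ as $u\to-\infty$), and in case (ii) one must justify that the exit of $\cV$ from $\cD_X$ occurs precisely at $\psi\wedge\phi$ rather than at some interior point. A further delicate point is that $G$ may blow up at $\psi\wedge\phi$, but since the separation-of-variables formula only integrates over the open interval $(u_1,\psi\wedge\phi)$ where $G$ is smooth, the identity \eqref{eq:explosionlifetimecbitcl} holds whether the resulting integral is finite or infinite.
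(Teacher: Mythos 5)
Your overall strategy coincides with the paper's: a phase-line analysis of the autonomous ODE $\dot{\cV}=G(\cV)$, using the convexity of $G$, the $\cC^1$ regularity (hence uniqueness) supplied by Assumption \ref{ass:lipschitzcbi}, equilibria as barriers in case (i), and separation of variables in case (ii). Two of your justifications, however, do not hold up as written. First, the claim that escape to $-\infty$ is excluded ``via the superlinear growth $\Phi(u)\to+\infty$ as $u\to-\infty$'' is not available in general: the theorem deliberately allows an arbitrary branching mechanism (e.g.\ $\sigma_X=0$, $\pi=0$, $b_X<0$ gives $\Phi(u)=|b_X|u\to-\infty$), so $\cS^{(u_2,u_3)}$ may well be unbounded below. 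The correct reason no finite-time blow-down occurs is convexity itself: on the set where $G<0$ a convex $G$ is bounded below by an affine function, so $|G|$ grows at most linearly as $x\to-\infty$ and $\int_{-\infty}^{u_1}\ud x/|G(x)|=+\infty$; equivalently, the separation-of-variables identity forces $\cV(t)\to\inf\cS^{(u_2,u_3)}$ only as $t\to+\infty$. This is also how the paper argues (it writes $t=-\int_{\cV(t)}^{u_1}\ud x/G(x)$ and lets $t\to+\infty$).

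Second, in case (ii) you never engage with the component $\cU$, even though $\uT^{(u_1,u_2,u_3)}$ is by Definition \ref{def:extendedcbitclriccatisystem} the \emph{joint} lifetime of $(\cU,\cV)$, and this is precisely where the upper limit $\psi\wedge\phi$ (rather than $\phi$, the genuine blow-up point of the vector field $G$) comes from. The ODE for $\cV$ involves only $\Phi$ and is solvable up to the time $\cV$ reaches $\phi$; when $\psi<\phi$ the trajectory passes $\psi$ with no obstruction from the ODE, and what terminates the lifetime there is that $\Psi(\cV)$ ceases to be finite, i.e.\ $\cU$ explodes. Your compactness-extension argument implicitly takes $\cD_X$ as the state space and so lands on $\psi\wedge\phi$, but you should (a) check that $\cU(t)=\int_0^t\Psi(\cV(s))\,\ud s$ stays finite for $t$ strictly before the exit time of $\cV$ from $\cD_X$ (immediate from the monotonicity of $\Psi$ and of $\cV$, since $\cV(s)\le\cV(t)<\psi\wedge\phi$), and (b) make explicit the dichotomy $\phi\le\psi$ versus $\phi>\psi$, as the paper does in the final paragraph of its proof, so that the single formula \eqref{eq:explosionlifetimecbitcl} with upper limit $\psi\wedge\phi$ is justified in both regimes. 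With these two repairs the argument matches the paper's.
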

\begin{proof}
For simplicity of notation, for fixed $(u_1,u_2,u_3)\in\cD_X\times\R\times\cD_Z$, we denote by $\uT^{\cU}$ and $\uT^{\cV}$ the lifetimes of the functions $\cU(\cdot,u_1,u_2,u_3)$ and $\cV(\cdot,u_1,u_2,u_3)$ solutions to \eqref{eq:extendedcbitclode_0}-\eqref{eq:extendedcbitclode}, respectively.
Making use of this notation, the lifetime $\uT^{(u_1,u_2,u_3)}$ can be decomposed as $\uT^{(u_1,u_2,u_3)}=\uT^{\cU}\wedge\uT^{\cV}$.
Always for simplicity of notation, we omit to write the superscript $(u_2,u_3)$ in $\chi^{(u_2,u_3)}$ and $\cS^{(u_2,u_3)}$.
Let us first consider the case $u_1\leq\chi$. If $\Phi(u_1)+u_2+\rho\,\sigma_X\,\sigma_Z\,u_1\,u_3+\Xi(u_3)=0$, then the constant function $\cV(\cdot,u_1,u_2,u_3)\equiv u_1$ is the unique solution to \eqref{eq:extendedcbitclode}, so that $\uT^{\cV}=+\infty$. Since $u_1\in\cD_X$, we also have $\uT^{\cU}=+\infty$, which implies that $\uT^{(u_1,u_2,u_3)}=+\infty$.
Suppose now that $\Phi(u_1)+u_2+\rho\,\sigma_X\,\sigma_Z\,u_1\,u_3+\Xi(u_3)<0$. Let us define $\xi:=\inf\cS$, with $\xi=+\infty$ if the set $\cS$ is empty. Note that, in the present case, $\xi<u_1$ and, if $\xi>-\infty$, then $\Phi(\xi)+u_2+\rho\,\sigma_X\,\sigma_Z\,u_3\,\xi+\Xi(u_3)=0$.
By convexity of $\Phi$, it holds that $\Phi(x)+u_2+\rho\,\sigma_X\,\sigma_Z\,u_3\,x+\Xi(u_3)<0$, for all $x\in(\xi,u_1]$. Therefore, equation (3.3) implies that the function $\cV(\cdot,u_1,u_2,u_3)$ is strictly decreasing and we can write
\[
t = -\int_{\cV(t,u_1,u_2,u_3)}^{u_1}\frac{\ud x}{\Phi(x) + u_2 + \rho\,\sigma_X\,\sigma_Z\,u_3\,x + \Xi(u_3)},
\qquad\text{ for all }t\geq0.
\]
Letting $t\to+\infty$ on both sides of this identity, we obtain that $\cV(t,u_1,u_2,u_3)\to\xi$ as $t\to+\infty$, while $\xi<\cV(t,u_1,u_2,u_3)\leq u_1$ for all $t\geq0$. This shows that $\uT^{\cV}=+\infty$. Moreover, making use of the structure of $\Psi$, we obtain $-\infty<\cU(t,u_1,u_2,u_3)\leq t\Psi(u_1)$ for all $t\geq0$, implying that $\uT^{\cU}=+\infty$. We have thus shown that $\uT^{(u_1,u_2,u_3)}=+\infty$.
If $u_1\leq\chi$ and $\Phi(u_1)+u_2+\rho\,\sigma_X\,\sigma_Z\,u_1\,u_3+\Xi(u_3)>0$, then we necessarily have $u_1<\xi\in\cD_X$ and $\Phi(x)+u_2+\rho\,\sigma_X\,\sigma_Z\,u_3\,x+\Xi(u_3)>0$, for all $x\in[u_1,\xi)$. Arguing similarly as above, this implies that $u_1\leq \cV(t,u_1,u_2,u_3)<\xi$ for all $t\geq0$, which in turn leads to $\uT^{(u_1,u_2,u_3)}=+\infty$.

Let us now consider the case $u_1>\chi$ (which includes the case $\chi=-\infty$).  
By using the convexity of $\Phi$, we have $\Phi(u_1)+u_2+\rho\,\sigma_X\,\sigma_Z\,u_1\,u_3+\Xi(u_3)> 0$, implying that the function $\cV(\cdot, u_1,u_2,u_3)$ is strictly increasing with values in $[u_1,\phi]$. The function $\cV(\cdot, u_1,u_2,u_3)$ can be extended to a maximal interval of existence $[0, \uT^*)$ such that one of the following two cases occurs:
\begin{enumerate}
\item[(i)] $\uT^*=+\infty$;
\item[(ii)] $\uT^*<+\infty$ and $\lim_{t\to\uT^*}\cV(t,u_1,u_2,u_3)=\phi$.
\end{enumerate}
In case (i), since $\cV(\cdot, u_1,u_2,u_3)$ is strictly increasing, the limit $l:=\lim_{t \to +\infty}\cV(t,u_1,u_2,u_3)$ is well-defined with values in $(u_1,\phi]\cup\{+\infty\}$. Suppose that $l < +\infty$, i.e., the line $y=l$ is a horizontal asymptote for $\cV(\cdot,u_1,u_2,u_3)$ as $t\to+\infty$. This implies that $\frac{\partial \cV}{\partial t}(t,u_1,u_2,u_3)\to0$ as $t\to+\infty$. Letting $t\to+\infty$ on both sides of  (3.3), this yields $\Phi(l) + u_2 + \rho\,\sigma_X\,\sigma_Z\,u_3\,l + \Xi(u_3) = 0$, contradicting the fact that $\Phi(x)+u_2+\rho\,\sigma_X\,\sigma_Z\,u_3\,x+\Xi(u_3) > 0$ for all $x > \chi$. Therefore, the limit $l$ must necessarily be infinite, which can only happen if $\phi=+\infty$ and, in this case, $\lim_{t\to+\infty}\cV(t,u_1,u_2,u_3)=\phi$, analogously to case (ii). In case (ii), let $(\uT_n)_{n\in\N}$ be an increasing sequence such that $\uT_n\to\uT^*$ as $n\to+\infty$.
Similarly as above, making use of equation (3.3), we can write
\be	\label{eq:n}
\uT_n = \int_{u_1}^{\cV(\uT_n,u_1,u_2,u_3)} {\frac{\ud x}{\Phi(x)+u_2+\rho\,\sigma_X\,\sigma_Z\,u_3\,x+\Xi(u_3)}},
\qquad\text{ for all }n\in\N.
\ee
Letting $n\to+\infty$ on both sides of \eqref{eq:n} yields
\[
\uT^* = \int_{u_1}^{\phi} {\frac{\ud x}{\Phi(x)+u_2+\rho\,\sigma_X\,\sigma_Z\,u_3\,x+\Xi(u_3)}},
\]
which represents the lifetime $\uT^{\cV}$ of the function $\cV(\cdot,u_1,u_2,u_3)$. 
To complete the proof, it suffices to observe that, if $\phi\leq\psi$, then $\int_0^t\Psi(\cV(s,u_1,u_2,u_3))\ud s$ is always finite whenever $\cV(t,u_1,u_2,u_3)$ is finite, so that $\uT^{(u_1,u_2,u_3)}=\uT^{\cV}$.
If $\phi>\psi$, then $\uT^{(u_1,u_2,u_3)}=\inf\{t\in\R_+:\cV(t,u_1,u_2,u_3)=\psi\}$. Without loss of generality, we can assume that there exists $n\in\N$ such that $\uT_n=\uT^{(u_1,u_2,u_3)}$ and $\cV(\uT_n,u_1,u_2,u_3)=\psi$. Inserting this into equation \eqref{eq:n} and combining the two cases $\phi\leq\psi$ and $\psi<\phi$ we obtain  formula \eqref{eq:explosionlifetimecbitcl}.
\end{proof}

\begin{rem}	\label{rem:expmoments}
(1)  The result of Theorem \ref{thm:lifetimetheoremcbitcl} is useful for financial applications (see Section \ref{sec:pricing}). Indeed, many derivatives can be efficiently priced by resorting to Fourier representations of their payoffs and exploiting the knowledge of the conditional characteristic function 
of $(X,Y,Z)$ (see, e.g., \cite[Section 10.3]{Fil09}). This requires an extension of \eqref{eq:extendedcbitclsemigroup} to the complex domain. As shown in \cite{KRM15}, the feasibility of this extension crucially depends on the fact that the lifetime $\uT^{(u_1,u_2,u_3)}$ is greater than the maturity of the payoff to be priced, for suitable $(u_1,u_2,u_3)\in\cD_X\times\R\times\cD_Z$. 

(2) In asset pricing models, the finiteness of the lifetime $\uT^{(u_1,u_2,u_3)}$, for $(u_1,u_2,u_3)\in\cD_X\times\R\times\cD_Z$,  is intimately related to the shape of the implied volatility smile at extreme strikes, see \cite{Lee04} and \cite[Section 5.1]{KR11}. Therefore, the availability of an explicit description of $\uT^{(u_1,u_2,u_3)}$ in Theorem \ref{thm:lifetimetheoremcbitcl} permits to characterize the tail behavior of the implied volatility smile in financial models driven by CBITCL processes, as will be illustrated in the examples considered in Section \ref{sec:examples}.

(3) In the case of classical CBI processes, a characterization of the lifetime of exponential moments has been obtained in \cite[Theorem 2.7]{FGS21}, which can be recovered as a special case of Theorem \ref{thm:lifetimetheoremcbitcl} by taking $u_2\in\R_-$ and $u_3=0$.
\end{rem}

The following corollary provides a necessary and sufficient condition for the finiteness of exponential moments for all $u_1\in\cD_X$, for fixed but arbitrary $(u_2,u_3)\in\R\times\cD_Z$. 
Whenever $\psi\wedge\phi=+\infty$, we denote $\Phi(\psi\wedge\phi):=\lim_{u\to+\infty}\Phi(u)$, which is well-defined with values in $\{-\infty,+\infty\}$.

\begin{cor}\label{cor:lifetimetheoremcbitcl}
Let $(X, Z)$ be a $\uCBITCL(X_0, \Psi, \Phi, \Xi, \rho)$ and suppose that Assumption \ref{ass:lipschitzcbi} holds. 
Assume that, if $\psi<+\infty$ and $\psi\leq\phi$, then $\int_1^{+\infty}e^{\psi x}\nu(\ud x)<+\infty$.
Let $(u_2, u_3) \in \R\times\cD_Z$. Then, $\uT^{(u_1, u_2, u_3)} = +\infty$ holds for all $u_1 \in \cD_X$ if and only if $\Phi(\psi \wedge \phi) + u_2 + \rho\,\sigma_X\,\sigma_Z\,u_3(\psi \wedge \phi) + \Xi(u_3) \leq 0$.
\end{cor}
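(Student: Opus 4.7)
The plan is to apply Theorem \ref{thm:lifetimetheoremcbitcl} to reduce the assertion to a condition on $\chi^{(u_2,u_3)}$. Setting $\alpha := \psi \wedge \phi$ and $F(x) := \Phi(x) + u_2 + \rho\,\sigma_X\,\sigma_Z\,u_3\,x + \Xi(u_3)$, one has $\cS^{(u_2,u_3)} = \{x \in \cD_X : F(x) \leq 0\}$ and $\chi^{(u_2,u_3)} = \sup \cS^{(u_2,u_3)}$, with $\chi^{(u_2,u_3)} \leq \alpha$ since $\cS^{(u_2,u_3)} \subseteq \cD_X \subseteq (-\infty,\alpha]$. I would first verify that $\alpha \in \cD_X$ whenever $\alpha < +\infty$: Assumption \ref{ass:lipschitzcbi} yields $\int_1^{+\infty} e^{\alpha x}\pi(\ud x) < +\infty$, hence $\Phi(\alpha) < +\infty$; the extra hypothesis of the corollary gives $\int_1^{+\infty} e^{\alpha x}\nu(\ud x) < +\infty$ when $\alpha = \psi \leq \phi$, whereas the corresponding integral is trivially finite when $\alpha = \phi < \psi$. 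Hence $\cD_X = (-\infty,\alpha]$, and by Theorem \ref{thm:lifetimetheoremcbitcl}(i) the condition ``$\uT^{(u_1,u_2,u_3)} = +\infty$ for all $u_1 \in \cD_X$'' is implied by $\chi^{(u_2,u_3)} = \alpha$.

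The strategy is then to show that $\chi^{(u_2,u_3)} = \alpha$ is equivalent to the displayed inequality $F(\alpha) \leq 0$. When $\alpha < +\infty$, $F$ is continuous on $\cD_X$, so $\cS^{(u_2,u_3)}$ is closed and $\chi^{(u_2,u_3)} = \alpha$ iff $\alpha \in \cS^{(u_2,u_3)}$, iff $F(\alpha) \leq 0$. When $\alpha = +\infty$, convexity of $F$ makes $\cS^{(u_2,u_3)}$ a sub-interval of $\R$, so $\chi^{(u_2,u_3)} = +\infty$ iff $F \leq 0$ on some tail $[a,+\infty)$, iff $\lim_{x \to +\infty} F(x) \leq 0$, matching the displayed inequality read under the limit convention introduced just before the statement. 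For the converse implication, assume $F(\alpha) > 0$, so that $\chi^{(u_2,u_3)} < \alpha$; then for any $u_1 \in (\chi^{(u_2,u_3)},\alpha] \cap \cD_X$, Theorem \ref{thm:lifetimetheoremcbitcl}(ii) yields $\uT^{(u_1,u_2,u_3)} = \int_{u_1}^{\alpha}\ud x/F(x)$, which, when $\alpha < +\infty$, is finite by continuity and positivity of $F$ on the compact interval $[u_1,\alpha]$.

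The main obstacle is the converse direction when $\alpha = +\infty$ with $F(\alpha) > 0$: one must establish $\int_{u_1}^{+\infty}\ud x/F(x) < +\infty$ for some $u_1$. Here one exploits the fact that $\cD_X = \R$ forces the branching mechanism $\Phi$ to grow super-linearly at $+\infty$---the diffusion term $\tfrac{1}{2}\sigma_X^2 u^2$ gives quadratic growth when $\sigma_X > 0$, while $\int(e^{ux}-1-ux)\pi(\ud x)$ grows at least exponentially when $\pi$ is nontrivial and admits all exponential moments. Hence $F(x) \to +\infty$ fast enough that $1/F$ is integrable at $+\infty$, yielding $\uT^{(u_1,u_2,u_3)} < +\infty$ and completing the contrapositive.
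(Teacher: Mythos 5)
Your proof follows essentially the same route as the paper's: reduce everything to Theorem \ref{thm:lifetimetheoremcbitcl}, use the extra integrability hypotheses to get $\psi\wedge\phi\in\cD_X$ when $\psi\wedge\phi<+\infty$, obtain the forward implication from $\chi^{(u_2,u_3)}=\psi\wedge\phi$, and prove the converse by contrapositive via the explosion formula \eqref{eq:explosionlifetimecbitcl} (in the finite case the paper simply evaluates at $u_1=\psi\wedge\phi$, so that the integral is over a degenerate interval and the lifetime equals $0$, which is marginally slicker than your compact-interval positivity argument). The one place you go beyond the paper is in explicitly justifying $\int^{+\infty}\ud x/F(x)<+\infty$ (in your notation) when $\psi\wedge\phi=+\infty$ via super-linear growth of $\Phi$; be aware that this growth argument breaks down in the degenerate configuration $\sigma_X=0$ and $\pi=0$, where $\Phi$ is affine and $1/F$ need not be integrable at $+\infty$, although the paper's own proof asserts the same finiteness without addressing that case either.
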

\begin{proof}
Note first that, under the present assumptions, $\cD_X=(-\infty,\psi\wedge\phi]$ whenever $\psi\wedge\phi<+\infty$.
Suppose first that $\Phi(\psi \wedge \phi) + u_2 + \rho\,\sigma_X\,\sigma_Z\,u_3(\psi \wedge \phi) + \Xi(u_3) \leq 0$. In this case, $\chi^{(u_2,u_3)}=\psi\wedge\phi$ (in both cases $\psi\wedge\phi<+\infty$ and $\psi\wedge\phi=+\infty$). By Theorem \ref{thm:lifetimetheoremcbitcl}, it follows that $\uT^{(u_1,u_2,u_3)}=+\infty$ for all $u_1\in\cD_X$.
Conversely, suppose that $\uT^{(u_1,u_2,u_3)}=+\infty$ for all $u_1\in\cD_X$. If $\psi\wedge\phi<+\infty$, then $\psi\wedge\phi\in\cD_X$ and, therefore, $\uT^{(\psi\wedge\phi,u_2,u_3)}=+\infty$. Arguing by contradiction, suppose that $\Phi(\psi \wedge \phi) + u_2 + \rho\,\sigma_X\,\sigma_Z\,u_3(\psi \wedge \phi) + \Xi(u_3) > 0$. In that case, by the properties of the function $\Phi$, we would have $\chi^{(u_2,u_3)}<\psi\wedge\phi$. But then formula \eqref{eq:explosionlifetimecbitcl} would imply that $\uT^{(\psi\wedge\phi,u_2,u_3)}=0$, thus leading to a contradiction.
On the other hand, if $\psi\wedge\phi=+\infty$, then $\uT^{(u_1,u_2,u_3)}=+\infty$ for all $u_1\in\cD_X=\R$. Arguing again by contradiction, suppose that $\Phi(\psi\wedge\phi) + u_2 + \rho\,\sigma_X\,\sigma_Z\,u_3(\psi \wedge \phi) + \Xi(u_3)>0$. In this case, there exists $M>0$ such that $\Phi(x) + u_2 + \rho\,\sigma_X\,\sigma_Z\,u_3\,x + \Xi(u_3)>0$ for all $x\geq M$. In turn, this yields $\chi^{(u_2,u_3)}<M$, which by Theorem \ref{thm:lifetimetheoremcbitcl} would imply that $\uT^{(M,u_2,u_3)}<+\infty$, thus leading to a contradiction. 
\end{proof}

\subsection{Asymptotic behavior of CBITCL processes}	\label{sec:asymptotic}

In this section, we study the long-term behavior of a CBITCL process $(X,Z)$ satisfying Assumption \ref{ass:no_common_jumps}. 
Let us first recall that, if the CBI process $X$ is strictly subcritical (i.e., $b_X>0$ in the branching mechanism \eqref{eq:newbranching}), then it converges in law to a unique stationary distribution $\eta$ as $t\to+\infty$, with Laplace transform
\[
L_{\eta}(\lambda) = \exp\left(\int_{\lambda}^0\frac{\Psi(x)}{\Phi(x)}\ud x\right),
\qquad\text{ for all }\lambda\leq0,
\]
see \cite[Theorem 10.4]{Li20}. 
In general, the time-changed L\'evy process $Z$ does not admit an ergodic distribution. However, similarly as in \cite[Section 3.2]{KR11} but under weaker technical requirements, we can prove that the rescaled cumulant generating function $\frac{1}{t}\log\EE[e^{u Z_t}]$ converges as $t\to+\infty$ to a limit corresponding to the cumulant generating function of an infinitely divisible random variable.

Let $(X,Z)$ be a $\uCBITCL(X_0, \Psi, \Phi, \Xi, \rho)$ and suppose that Assumption \ref{ass:lipschitzcbi} is satisfied. We recall from Lemma \ref{lem:extendedlaplacefouriercbitcl} that 
\[
\EE\bigl[e^{uZ_t}\bigr] = \exp\bigl(\cU(t, 0, 0, u) + \cV(t, 0, 0, u)X_0\bigr), 
\qquad\text{ for all }u \in \cD_Z,
\]
where $(\cU(\cdot, 0, 0, u), \cV(\cdot, 0, 0, u))$ is the unique solution to the following extended Riccati system:
\begin{align}
\cU(t, 0, 0, u) &= \int_0^{t} {\Psi\bigl(\cV(s, 0, 0, u)\bigr)\ud s},\label{eq:functionU(0,u)}\\
\frac{\partial \cV}{\partial t}(t, 0, 0, u) &= \Phi\bigl(\cV(t, 0, 0, u)\bigr) 
+ \rho\,\sigma_X\,\sigma_Z\,u\,\cV(t, 0, 0, u)
+ \Xi(u), \qquad \cV(0, 0, 0, u) = 0,\label{eq:functionV(0,u)}
\end{align}	
for all $0 \leq t < \uT(u)$, where $\uT(u):=\uT^{(0, 0, u)}$ denotes the  maximal lifetime of $(\cU(\cdot, 0, 0, u),\cV(\cdot, 0, 0, u))$. 
The next corollary follows directly from Theorem \ref{thm:lifetimetheoremcbitcl} and provides an explicit description of $\uT(u)$. For simplicity of notation, we denote $\chi(u):=\chi^{(0,u)}$.

\begin{cor}\label{cor:lifetimeTu}
Let $(X,Z)$ be a $\uCBITCL(X_0, \Psi, \Phi, \Xi, \rho)$ and suppose that Assumption \ref{ass:lipschitzcbi} holds. Then, for every $u \in \cD_Z$, the lifetime $\uT(u)$ is given as follows:
\begin{enumerate}
\item[(i)] if $\chi(u) \geq 0$, then $\uT(u) = +\infty$;
\item[(ii)] if $\chi(u) < 0$, then 
\be\label{eq:lifetimeTufinite}
\uT(u) = \int_0^{\psi\wedge\phi} {\frac{\ud x}{\Phi(x) + \rho\,\sigma_Z\,\sigma_Z\,u\,x + \Xi(u)}}.
\ee
\end{enumerate}
\end{cor}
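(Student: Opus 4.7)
The plan is to derive this as an immediate specialization of Theorem \ref{thm:lifetimetheoremcbitcl} with the particular choice $(u_1, u_2, u_3) = (0, 0, u)$ for $u \in \cD_Z$. Under Assumption \ref{ass:lipschitzcbi}, all hypotheses of Theorem \ref{thm:lifetimetheoremcbitcl} are in force, so the lifetime $\uT(u) := \uT^{(0,0,u)}$ of the extended Riccati system \eqref{eq:functionU(0,u)}--\eqref{eq:functionV(0,u)} is described by the dichotomy stated there.

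With $u_2 = 0$ and $u_3 = u$, the set $\cS^{(u_2, u_3)}$ specializes to $\{x \in \cD_X : \Phi(x) + \rho\,\sigma_X\,\sigma_Z\,u\,x + \Xi(u) \leq 0\}$, whose supremum is by definition $\chi(u)$. Since $u_1 = 0$, the condition $u_1 \leq \chi^{(u_2, u_3)}$ in part (i) of Theorem \ref{thm:lifetimetheoremcbitcl} becomes $\chi(u) \geq 0$, yielding $\uT(u) = +\infty$; the complementary condition $u_1 > \chi^{(u_2, u_3)}$ becomes $\chi(u) < 0$, and the integral formula \eqref{eq:explosionlifetimecbitcl} then reads
\[
\uT(u) = \int_0^{\psi \wedge \phi} \frac{\ud x}{\Phi(x) + \rho\,\sigma_X\,\sigma_Z\,u\,x + \Xi(u)},
\]
which is precisely \eqref{eq:lifetimeTufinite}. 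No genuine obstacle arises, since the result is a direct translation of Theorem \ref{thm:lifetimetheoremcbitcl} to the case $u_1 = u_2 = 0$; the only point to check is the matching of the set $\cS^{(0,u)}$ with the definition of $\chi(u)$, which is immediate from the notational conventions.
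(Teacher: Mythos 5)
Your proof is correct and is exactly the paper's argument: the paper itself states that the corollary ``follows directly from Theorem \ref{thm:lifetimetheoremcbitcl}'' via the specialization $(u_1,u_2,u_3)=(0,0,u)$, which is precisely what you carry out. (Incidentally, your integrand $\Phi(x)+\rho\,\sigma_X\,\sigma_Z\,u\,x+\Xi(u)$ is the correct one inherited from \eqref{eq:explosionlifetimecbitcl}; the $\sigma_Z\,\sigma_Z$ in the corollary's display is a typo.)
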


By Corollary \ref{cor:lifetimeTu}, $\uT(u) = +\infty$ for all $u\in\cD_Z$ such that $\chi(u)\geq0$, meaning that the functions $\cU(t,0,0,u)$ and $\cV(t,0,0,u)$ are finite for all $t\geq0$. The study of the asymptotic behavior of $\EE[e^{uZ_t}]$ therefore requires analysing the asymptotic properties of the functions $\cU(\cdot,0,0,u)$ and $\cV(\cdot,0,0,u)$ for all $u\in\cX:= \{u \in \cD_Z : \chi(u)\geq0\}$. This is the content of the next proposition, which specializes \cite[Theorem 3.4]{KR11} to the case of CBITCL processes. More precisely, by relying on Corollary \ref{cor:lifetimeTu} and exploiting the specific structure of a CBITCL process, we obtain an asymptotic result which only requires the CBI process $X$ to be strictly subcritical, besides the technical requirement of Assumption \ref{ass:lipschitzcbi}, thereby weakening some of the assumptions of \cite{KR11}.

\begin{prop}\label{prop:longtermbehavior}
Let $(X,Z)$ be a $\uCBITCL(X_0, \Psi, \Phi, \Xi, \rho)$ with $b_X>0$ and suppose that Assumption \ref{ass:lipschitzcbi} holds. For every $u\in\cX$, define $\xi(u) := \inf\bigl\{x\in \cD_X : \Phi(x) + \rho\,\sigma_Z\,\sigma_Z\,u\,x + \Xi(u) \leq 0 \bigr\}$ if $\Xi(u)\neq0$ and $\xi(u):=0$ otherwise. Then, for every $u\in\cX$, it holds that
\[
\underset{t\to+\infty}{\lim}\,\cV(t, 0, 0, u) = \xi(u)
\qquad\text{and}\qquad
\underset{t\to+\infty}{\lim}\,\frac{1}{t}\,\cU(t, 0, 0, u) = \Psi\bigl(\xi(u)\bigr).
\]
\end{prop}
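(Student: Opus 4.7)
Set $F(x, u) := \Phi(x) + \rho\sigma_X\sigma_Z u x + \Xi(u)$, so that \eqref{eq:functionV(0,u)} becomes the scalar autonomous ODE $\partial_t \cV = F(\cV, u)$ with $\cV(0, 0, 0, u) = 0$. By Corollary \ref{cor:lifetimeTu} together with $u \in \cX$, this solution exists globally on $[0, +\infty)$. Since $\Phi$ is convex, so is $x \mapsto F(x, u)$, hence the sub-level set $\cS^{(0,u)} = \{x \in \cD_X : F(x, u) \leq 0\}$ is a closed interval $[\xi(u), \chi(u)]$ whenever non-empty. The strategy is a standard phase-line analysis of the ODE to identify $\xi(u)$ as the equilibrium approached by $\cV$, followed by a Ces\`aro argument for $\cU(\cdot, 0, 0, u) = \int_0^\cdot \Psi(\cV(s, 0, 0, u))\,ds$.

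I would first dispose of the degenerate case $\Xi(u) = 0$: here $F(0, u) = \Phi(0) = 0$, so $\cV \equiv 0 = \xi(u)$ is the unique solution (by Assumption \ref{ass:lipschitzcbi}) and $\cU(t, 0, 0, u) = t\,\Psi(0) = t\,\Psi(\xi(u))$, giving both limits trivially. For $\Xi(u) \neq 0$, the argument splits on the sign of $F(0, u) = \Xi(u)$. If $\Xi(u) > 0$, the closedness of $\cS^{(0,u)}$ and $\chi(u) \geq 0$ force $0 < \xi(u) \leq \chi(u)$, and $F(\cV, u) > 0$ on $[0, \xi(u))$; thus $\cV$ is strictly increasing, bounded above by the equilibrium $\xi(u)$, which cannot be reached in finite time by ODE uniqueness, so $\cV \uparrow \xi(u)$. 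If $\Xi(u) < 0$, a symmetric argument gives $\xi(u) < 0 < \chi(u)$ and $\cV \downarrow \xi(u)$, provided $\xi(u) > -\infty$: for $x \leq 0$ one has $\Phi(x) \geq -b_X x + \tfrac{1}{2}\sigma_X^2 x^2$ (the jump integrand being non-negative), so either the quadratic term dominates the linear $\rho\sigma_X\sigma_Z u x$ when $\sigma_X > 0$, or $\rho\sigma_X\sigma_Z u x = 0$ when $\sigma_X = 0$; in both sub-cases $b_X > 0$ guarantees $F(x, u) \to +\infty$ as $x \to -\infty$, so $\xi(u) > -\infty$. In either sub-case the limit must be an equilibrium in the closed interval spanned by $0$ and $\xi(u)$, and convexity of $F$ rules out $\chi(u)$, forcing the limit to equal $\xi(u)$.

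For the second limit, $\xi(u) \in \cD_X$ and $\Psi$ is continuous on $\cD_X^{\circ}$ with one-sided continuity at any boundary point by convexity; since $\cV(t, 0, 0, u)$ approaches $\xi(u)$ monotonically from one side, $\Psi(\cV(t, 0, 0, u)) \to \Psi(\xi(u))$ as $t \to +\infty$. Ces\`aro's theorem then yields
\[
\frac{1}{t}\,\cU(t, 0, 0, u) \;=\; \frac{1}{t}\int_0^t \Psi\bigl(\cV(s, 0, 0, u)\bigr)\,ds \;\longrightarrow\; \Psi\bigl(\xi(u)\bigr), \qquad t \to +\infty.
\]

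The principal obstacle is the phase-line analysis for $\cV$, and within it the verification that $\xi(u) > -\infty$ in the regime $\Xi(u) < 0$: this is precisely where the strict subcriticality hypothesis $b_X > 0$ enters, ruling out the ODE trajectory escaping to $-\infty$. Once this is in place, monotone bounded convergence, uniqueness via Assumption \ref{ass:lipschitzcbi}, and the Ces\`aro step are all routine.
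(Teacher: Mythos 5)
Your proposal is correct and follows essentially the same route as the paper's proof: a phase-line/monotonicity analysis of the autonomous ODE for $\cV$, with $b_X>0$ used exactly where you place it (to force $\Phi(x)+\rho\,\sigma_X\sigma_Z\,u\,x\to+\infty$ as $x\to-\infty$ and hence $\xi(u)>-\infty$), followed by the Ces\`aro argument for $\cU$. The only cosmetic difference is that you split cases on the sign of $\Xi(u)$ while the paper splits on the sign of $\xi(u)$, which is equivalent.
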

\begin{proof}
The branching mechanism $\Phi$ satisfies $\Phi(0)=0$ and is continuous and convex. Moreover, if $b_X>0$, then $\lim_{x\to-\infty} (\Phi(x) + \rho\,\sigma_X\,\sigma_Z\,u\,x) = +\infty$, for every $u\in\cX$. 
Making use of these properties, the fact that $\{x\in \cD_X : \Phi(x) + \rho\,\sigma_X\,\sigma_Z\,u\,x + \Xi(u) \leq 0 \}\neq\emptyset$, for every $u\in\cX$, implies that the quantity $\xi(u)$ is always finite and belongs to $\cD_X$. In addition, by continuity of $\Phi$, it holds that
\be\label{eq:equationXi}
\Phi\bigl(\xi(u)\bigr) + \rho\,\sigma_X\,\sigma_Z\,u\,\xi(u) + \Xi(u) = 0, 
\qquad \text{for all } u\in\cX.
\ee 
Let us now consider separately the three cases $\xi(u)=0$, $\xi(u)<0$ and $\xi(u)>0$. If $\xi(u)=0$, then $\Xi(u)=0$ by \eqref{eq:equationXi} or by definition of $\xi(u)$ and the function $\cV(\cdot,0,0,u)\equiv0$ is the unique solution to \eqref{eq:functionV(0,u)}, so that $\cV(t, 0, 0, u) \to \xi(u)$ as $t \to +\infty$ trivially holds. 
If $\xi(u)<0$, then the fact that $\chi(u)\geq0$ together with the convexity of $\Phi$ implies that $\Xi(u)<0$ necessarily holds. Therefore, by convexity of $\Phi$, it holds that $\Phi(x) + \rho\,\sigma_X\,\sigma_Z\,u\,x + \Xi(u) < 0$ for all $x\in(\xi(u),0]$. Equation \eqref{eq:functionV(0,u)} then implies that $\cV(\cdot, 0, 0, u)$ is strictly decreasing and satisfies $\xi(u) < \cV(t,0,0,u) \leq 0$ and
\[
t = \int_{\cV(t,0,0,u)}^0 {\frac{-\ud x}{\Phi(x) + \rho\,\sigma_X\,\sigma_Z\,u\,x + \Xi(u)}},
\qquad\text{ for all }t\geq0.
\]
Letting $t\to+\infty$ on both sides of this identity and recalling \eqref{eq:equationXi}, we obtain $\cV(t, 0, 0, u) \to \xi(u)$ as $t \to +\infty$.
If $\xi(u)>0$, then we necessarily have $\Phi(x) + \rho\,\sigma_X\,\sigma_Z\,u\,x + \Xi(u) > 0$, for all $x\in[0,\xi(u))$, implying that $\cV(\cdot, 0, 0, u)$ is strictly increasing and satisfies $0\leq\cV(t,0,0,u)<\xi(u)$ and
\[
t = \int_{\cV(t,0,0,u)}^0 {\frac{-\ud x}{\Phi(x) + \rho\,\sigma_X\,\sigma_Z\,u\,x + \Xi(u)}},
\qquad\text{ for all }t\geq0.
\]
Analogously to the preceding case, letting $t\to+\infty$ on both sides of the latter identity and making use of \eqref{eq:equationXi}, we obtain that $\cV(t, 0, 0,u) \to \xi(u)$ as $t \to +\infty$.
Finally, for all $u\in\cX$, the convergence of the function $t\mapsto(1/t)\,\cU(t, 0, 0,u)$ directly follows from equation \eqref{eq:functionU(0,u)}:
\[
\frac{1}{t}\,\cU(t, 0, 0,u) = \frac{1}{t}\,\int_0^t {\Psi\bigl(\cV(s, 0,0,u)\bigr)\,\ud s} \longrightarrow \Psi\bigl(\xi(u)\bigr)
\qquad\text{ as }t\to+\infty.
\]
\end{proof}

Proposition \ref{prop:longtermbehavior} yields the following long-term behavior of the time-changed L\'evy process $Z$:
\[
\frac{1}{t}\log\EE\bigl[e^{uZ_t}\bigr] 
= \frac{1}{t}\,\cU(t, 0, 0,u) + \frac{1}{t}\,\cV(t, 0, 0,u)X_0 
\underset{t\to+\infty}{\longrightarrow} \Psi\bigl(\xi(u)\bigr), 
\qquad\text{ for all $u \in \cX$.}
\]
Similarly as in \cite[Theorem 3.4]{KR11}, it can be shown that $\xi(\cdot)$ and $\Psi(\xi(\cdot))$ are cumulant generating functions of infinitely divisible random variables. 
We can therefore conclude that the marginal distributions of $Z$ are asymptotically equivalent to those of a L\'evy process with characteristic exponent $\Psi(\xi(\cdot))$. 
Notice that $\Psi(\xi(\cdot))$ corresponds to the exponent obtained by subordinating a L\'evy process with exponent $\xi$ by an independent L\'evy process with exponent $\Psi$, see \cite[Theorem 30.1]{S99}. In particular, this subordinator is equivalent to the L\'evy process $K=(K_t)_{t \geq 0}$ appearing in the Lamperti-type representation of the CBI process $X$ (recall the stochastic time change equation \eqref{eq:stochtimechangeequation}, see also \cite{EPU13} and \cite[Section 2.5]{phdthesis_szulda}).

\section{CBITCL-preserving changes of probability}
\label{sec:girsanovcbitcl}

In this section, we describe a class of equivalent changes of probability that leave invariant the class of CBITCL processes. More precisely, we consider Esscher-type changes of measure under which a CBITCL process remains a CBITCL process, with modified branching and immigration mechanisms and L\'evy exponent.
The results of this section are motivated by financial applications, where one typically wants to ensure that a model preserves its structural characteristics under both the statistical and the risk-neutral probability, as well as under risk-neutral probabilities associated to different num\'eraires (see \cite{FGS22} for an application to a multi-currency market).

Similarly as in Section \ref{sec:moments}, we suppose that Assumption \ref{ass:no_common_jumps} holds. Let us fix two constants $\zeta \in \R$ and $\lambda \in \R$ and consider the process $\cW=(\cW_t)_{t \geq 0}$ defined by
\be\label{eq:processW}
\cW_t := \zeta(X_t-X_0) + \lambda Z_t, 
\qquad \text{ for all } t \geq 0.
\ee 
In view of \cite[Proposition II.8.26]{JS03}, the process $\cW$ is an exponentially special semimartingale if and only if $\zeta \in \cD_X$ and $\lambda \in \cD_Z$. In such a case, $\cW$ admits a unique exponential compensator, i.e., a predictable finite variation process $\cK=(\cK_t)_{t \geq 0}$ such that $\exp(\cW-\cK)$ is a local martingale. 
The following lemma provides the explicit representation of the exponential compensator $\cK$.

\begin{lem}\label{lem:exponentialcompensator}
Let $(X, Z)$ be a $\uCBITCL(X_0, \Psi, \Phi, \Xi, \rho)$. Consider the process $\cW$ defined by \eqref{eq:processW}, with $\zeta \in \cD_X$ and $\lambda \in \cD_Z$. Then, the exponential compensator $\cK$ of $\cW$ is given by 
\be\label{eq:exponentialcompensator}
\cK_t = t\,\Psi(\zeta) + Y_t\,\bigl(\Phi(\zeta) + \zeta\,\lambda\,\rho\,\sigma_X\,\sigma_Z + \Xi(\lambda)\bigr), 
\qquad \text{ for all } t \geq 0.
\ee
\end{lem}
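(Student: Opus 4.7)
The plan is to verify directly that $M_t := \exp(\cW_t - \cK_t)$, with $\cK_t$ as in \eqref{eq:exponentialcompensator}, is a local martingale, and then appeal to uniqueness of the exponential compensator for an exponentially special semimartingale (see \cite[Proposition II.8.26]{JS03}) to conclude. Since $\cK$ is absolutely continuous in $t$ with $\ud\cK_t = \Psi(\zeta)\,\ud t + \bigl(\Phi(\zeta) + \zeta\lambda\rho\sigma_X\sigma_Z + \Xi(\lambda)\bigr)X_t\,\ud t$, it is predictable and of finite variation, as required.

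First, I would work on the probability space carrying the extended Dawson-Li representation \eqref{eq:cbitclsde1}-\eqref{eq:cbitclsde2} given under Assumption \ref{ass:no_common_jumps}, and apply It\^o's formula to $M$. Since $\cK$ is continuous, $\Delta(\cW - \cK) = \Delta\cW$, yielding
\[
\frac{\ud M_t}{M_{t-}} = \ud\cW_t - \ud\cK_t + \frac{1}{2}\ud\langle\cW^c\rangle_t + \int\bigl(e^{y} - 1 - y\bigr)\mu^\cW(\ud t,\ud y),
\]
where $\mu^\cW$ is the integer-valued jump measure of $\cW$. The continuous quadratic variation contributes $\tfrac{1}{2}X_t\bigl(\zeta^2\sigma_X^2 + 2\zeta\lambda\rho\sigma_X\sigma_Z + \lambda^2\sigma_Z^2\bigr)\,\ud t$.

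Second, I would decompose the jump integrals according to the three independent Poisson random measures $N_0$, $N_1$, $N_2$ and combine each with the corresponding contribution from $\ud\cW_t$. For $N_0$, the uncompensated integral $\zeta\int x N_0$ in $\zeta\ud X_t$ combines with the It\^o jump correction to give $\int(e^{\zeta x}-1)N_0(\ud t,\ud x)$, whose compensator equals $(\Psi(\zeta)-\beta\zeta)\ud t$ by \eqref{eq:immigrationmechanism}. For $N_1$, which is fully compensated in \eqref{eq:cbitclsde1} thanks to the first-moment condition in Assumption \ref{ass:no_common_jumps}, combining $\zeta\int x\tilde N_1$ with the associated jump correction and re-centering produces the compensator drift $X_{t-}\bigl[\Phi(\zeta) + b_X\zeta - \tfrac{1}{2}\sigma_X^2\zeta^2\bigr]\ud t$ by \eqref{eq:newbranching}. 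For $N_2$, only the small-jump part is compensated in \eqref{eq:cbitclsde2}, which matches the truncation $\ind_{\{|z|<1\}}$ in \eqref{eq:newxi}; the analogous manipulation yields the compensator drift $X_{t-}\bigl[\Xi(\lambda) - b_Z\lambda - \tfrac{1}{2}\sigma_Z^2\lambda^2\bigr]\ud t$.

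Third, I would collect the predictable finite-variation part of $\ud M_t/M_{t-}$: the SDE drift $\zeta(\beta - b_X X_t) + \lambda b_Z X_t$, the three jump-compensator drifts above, the It\^o correction, and $-\ud\cK_t$. The constant terms give $\zeta\beta + (\Psi(\zeta)-\beta\zeta) - \Psi(\zeta) = 0$, while the $X_t$-coefficient simplifies as
\[
-\zeta b_X + \lambda b_Z + \tfrac{1}{2}(\zeta^2\sigma_X^2 + 2\zeta\lambda\rho\sigma_X\sigma_Z + \lambda^2\sigma_Z^2) + \Phi(\zeta) + b_X\zeta - \tfrac{1}{2}\sigma_X^2\zeta^2 + \Xi(\lambda) - b_Z\lambda - \tfrac{1}{2}\sigma_Z^2\lambda^2 - \bigl(\Phi(\zeta)+\zeta\lambda\rho\sigma_X\sigma_Z+\Xi(\lambda)\bigr),
\]
in which all $b_X$, $b_Z$, $\sigma_X^2$, $\sigma_Z^2$, $\Phi(\zeta)$ and $\Xi(\lambda)$ contributions telescope to zero. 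Hence $M$ is a local martingale, and uniqueness gives \eqref{eq:exponentialcompensator}.

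The main obstacle is the bookkeeping of the three differently-compensated jump measures: making sure the constants $\beta\zeta$, $b_X\zeta$, $\tfrac{1}{2}\sigma_X^2\zeta^2$, $b_Z\lambda$, $\tfrac{1}{2}\sigma_Z^2\lambda^2$ that appear after re-centering each jump integral match exactly those hidden inside $\Psi(\zeta)$, $\Phi(\zeta)$ and $\Xi(\lambda)$ (the latter with its own truncation at $|z|<1$), so that the clean form of \eqref{eq:exponentialcompensator} emerges after cancellation. One must also check, at the outset, that each of these $\pi$- and $\gamma$-integrals is finite, which follows from $\zeta\in\cD_X$, $\lambda\in\cD_Z$ and Assumption \ref{ass:no_common_jumps}.
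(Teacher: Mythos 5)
Your proof is correct, and the drift bookkeeping checks out: the constant terms cancel via $\zeta\beta + (\Psi(\zeta)-\beta\zeta) - \Psi(\zeta)=0$ and the $X$-linear terms telescope exactly as you claim, because the re-centering constants produced by the three differently-compensated jump integrals are precisely the ones built into $\Psi$, $\Phi$ and $\Xi$ (including the $|z|<1$ truncation in $\Xi$). The route, however, differs from the paper's. The paper does not apply It\^o's formula at all: it invokes Kallsen--Shiryaev (Theorems 2.18 and 2.19 of their paper on the cumulant process), which identify the exponential compensator of a quasi-left-continuous exponentially special semimartingale with its modified Laplace cumulant process $\int_0^t(\theta^{\top}B_s + \tfrac{1}{2}\theta^{\top}C_s\theta + \int(e^{\theta^{\top}x}-1-\theta^{\top}x\ind_{\{|x|<1\}})F_s(\ud x))\ud s$, and then simply substitutes the differential characteristics \eqref{eq:char} of $(X,Z)$. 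Your argument is in effect a self-contained, elementary re-derivation of that identification in this special case: you pay for it with the three-way jump bookkeeping, but you avoid the external citation and make the local-martingale property of $\exp(\cW-\cK)$ explicit (which the paper needs anyway at the start of the proof of Theorem \ref{thm:girsanovcbitcl}). One point worth making explicit in your write-up: the uniqueness you invoke is that of the exponential compensator as the predictable finite-variation process vanishing at zero making $\exp(\cW-\cK)$ a local martingale (stated in the paragraph preceding the lemma), and your integrability check ($\zeta\in\cD_X$, $\lambda\in\cD_Z$ giving finiteness of the $\nu$-, $\pi$- and $\gamma$-integrals of the exponentials) is exactly what guarantees that $\cW$ is exponentially special so that this uniqueness applies.
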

\begin{proof}
In view of \cite[Theorems 2.18 and 2.19]{KS02cumulant}, taking into account that CBITCL processes are quasi-left-continuous, the exponential compensator $\cK$ coincides with the modified Laplace cumulant process of $(X,Z)$ computed at $\theta=(\zeta,\lambda)$. The latter can be explicitly expressed as follows in terms of the semimartingale differential characteristics $(B,C,F)$ of $(X,Z)$:
\[
\cK_t = \int_0^t \biggl(\theta^{\top}B_s + \frac{1}{2}\theta^{\top}C_s\,\theta + \int_{\R^2} {\bigl(e^{\theta^{\top}x} - 1 - \theta^{\top}x\ind_{\{|x| < 1\}}\bigr)\,F_s(\ud x)}\biggr)\ud s.
\]
Making use of the explicit representation \eqref{eq:char} of the characteristics $(B,C,F)$, we then obtain
\begin{align*}
\cK_t &= t\biggl(\beta\zeta + \int_0^{+\infty} {(e^{\zeta x} - 1)\nu(\ud x)}\biggr) + Y_t\biggl(-b_X\zeta + \frac{1}{2}\sigma^2_X\zeta^2 + \int_0^{+\infty} {(e^{\zeta x} - 1 - \zeta x)\pi(\ud x)}\biggr) \\
&\quad+ Y_t\,\zeta\,\lambda\,\rho\,\sigma_X\,\sigma_Z + Y_t\biggl(b_Z\lambda + \frac{1}{2}\sigma_Z^2\lambda^2 + \int_{\R} {(e^{\lambda z} - 1 - \lambda z\ind_{\{|z|<1\}})\gamma(\ud z)}\biggr) \\
&= t\,\Psi(\zeta) + Y_t\,\bigl(\Phi(\zeta) + \zeta\,\lambda\,\rho\,\sigma_X\,\sigma_Z + \Xi(\lambda)\bigr).
\end{align*}
\end{proof}

The process $\cW$ introduced in \eqref{eq:processW} can be used to define an equivalent change of probability that leaves invariant the class of CBITCL processes. To this effect, we consider a time horizon $\horizon < +\infty$ and assume that $(X, Z)$ is directly given by its extended Dawson-Li representation \eqref{eq:dawsonli} on a filtered probability space $(\Omega,\cF,\F,\PP)$. 
In view of Theorem \ref{thm:weakequivalencecbitcl}, this entails no loss of generality.

\begin{thm}\label{thm:girsanovcbitcl}
Let $(X, Z)$ be a $\uCBITCL(X_0, \Psi, \Phi, \Xi, \rho)$ and suppose that Assumption \ref{ass:lipschitzcbi} holds. Consider the process $\cW$ defined in \eqref{eq:processW}, with $\zeta \in \cD_X$ and $\lambda \in \cD_Z$, and its exponential compensator $\cK$ given by \eqref{eq:exponentialcompensator}. Then, the process $(\exp(\cW_t-\cK_t))_{t\in[0,\horizon]}$ is a martingale.
Moreover, under the probability measure $\PP^{\prime}\sim\PP$ defined on $(\Omega,\cF)$ by
\be\label{eq:measurechangecbitcl}
\frac{\ud\PP^{\prime}}{\ud\PP} := e^{\cW_{\horizon} - \cK_{\horizon}}, 
\ee 
the process $(X, Z)$ is equivalent to a CBITCL process, with parameters $\beta^{\prime}$, $\nu^{\prime}$, $b_X^{\prime}$, $\sigma_X^{\prime}$, $\pi^{\prime}$, $\rho^{\prime}$, $b_Z^{\prime}$, $\sigma_Z^{\prime}$, and $\gamma^{\prime}$ reported in Table \ref{table:newparameterscbitcl}, and Assumption \ref{ass:no_common_jumps} remains satisfied under $\PP^{\prime}$.
\end{thm}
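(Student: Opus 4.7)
The plan is to split the proof into two parts: the martingale property of $D := \exp(\cW-\cK)$, and then the identification of $(X,Z)$ under $\PP'$ as a CBITCL with modified parameters via Proposition \ref{prop:cbitclsemimartingale}.

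For the martingale property, $D$ is a non-negative local martingale with $D_0=1$ by definition of the exponential compensator. To upgrade this to a true martingale on $[0,\horizon]$, the plan is to show that $\EE[D_t]=1$ for every $t\in[0,\horizon]$ via Lemma \ref{lem:extendedlaplacefouriercbitcl}. Setting $\mu := \Phi(\zeta)+\zeta\lambda\rho\sigma_X\sigma_Z+\Xi(\lambda)$, I would rewrite $D_t = e^{-\zeta X_0-t\Psi(\zeta)}\exp(\zeta X_t-\mu Y_t+\lambda Z_t)$ and apply Lemma \ref{lem:extendedlaplacefouriercbitcl} with $(u_1,u_2,u_3)=(\zeta,-\mu,\lambda)$. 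The extended Riccati equation for $\cV$ becomes $\partial_t\cV = \Phi(\cV)-\Phi(\zeta)+\rho\sigma_X\sigma_Z\lambda(\cV-\zeta)$ with $\cV(0)=\zeta$, whose right-hand side vanishes at $\zeta$. By Assumption \ref{ass:lipschitzcbi} the function $\Phi$ is $\cC^1$ on $\cD_X$, so local Lipschitz uniqueness forces $\cV\equiv\zeta$, hence $\cU(t)=t\Psi(\zeta)$. The applicability of Lemma \ref{lem:extendedlaplacefouriercbitcl} is ensured by Theorem \ref{thm:lifetimetheoremcbitcl}(i), since $\zeta\in\cS^{(-\mu,\lambda)}$ by construction and therefore $\uT^{(\zeta,-\mu,\lambda)}=+\infty$. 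Substitution yields $\EE[D_t]=1$.

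For the second part, the plan is to appeal to Proposition \ref{prop:cbitclsemimartingale} after computing the $\PP'$-differential characteristics of $(X,Z)$ via Girsanov's theorem for semimartingales (e.g.\ Jacod--Shiryaev III.3.24). Writing $D=\cE(N)$ for a local $\PP$-martingale $N$, the jumps of $N$ are $\Delta N_t = e^{\zeta\Delta X_t+\lambda\Delta Z_t}-1$ and the continuous martingale part of $N$ is $\zeta\sigma_X\int\sqrt{X_{s-}}\,\ud B^X_s + \lambda\sigma_Z\int\sqrt{X_{s-}}\,\ud B^Z_s$. Girsanov's theorem then gives $C'_t=C_t$, so $\sigma_X,\sigma_Z,\rho$ are unchanged, while the compensator of the jump measure is multiplied by $e^{\zeta x+\lambda z}$. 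Under Assumption \ref{ass:no_common_jumps} this preserves the product form and yields $\nu'(\ud x)=e^{\zeta x}\nu(\ud x)$, $\pi'(\ud x)=e^{\zeta x}\pi(\ud x)$, $\gamma'(\ud z)=e^{\lambda z}\gamma(\ud z)$. The drift acquires the covariation contribution $X_{t-}\sigma^2(\zeta,\lambda)^\top$ together with the truncation correction $\int h(x,z)(e^{\zeta x+\lambda z}-1)F_t(\ud x,\ud z)$, which is purely linear in $X_{t-}$ by the factorised structure of $F_t$; this defines the new parameters $b_X'$ and $b_Z'$ while leaving $\beta'=\beta$.

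To conclude, one must verify admissibility of the new parameters. The integrability of $\nu'$, $\pi'$, $\gamma'$ near zero is automatic since $e^{\zeta\cdot}$ and $e^{\lambda\cdot}$ are bounded there. The crucial requirement $\int_1^{+\infty}x\,e^{\zeta x}\pi(\ud x)<+\infty$ follows from $\zeta\in\cD_X$ combined with Assumption \ref{ass:lipschitzcbi}: when $\zeta<\psi\wedge\phi$ it is immediate from formula \eqref{eq:derivativebranchingcbi}, and when $\zeta=\psi\wedge\phi$ it is exactly the content of Assumption \ref{ass:lipschitzcbi}. Assumption \ref{ass:no_common_jumps} is preserved since the multiplicative change does not create common jumps. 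Matching the new characteristics against \eqref{eq:char} then identifies $(X,Z)$ under $\PP'$ as equivalent to a CBITCL with the parameters reported in Table \ref{table:newparameterscbitcl}, by Proposition \ref{prop:cbitclsemimartingale}.

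I expect the main obstacle to be the book-keeping in the Girsanov drift computation: one needs to confirm that all contributions from the continuous covariation and from the reshuffling of the truncated compensator assemble into the exact affine form required by Proposition \ref{prop:cbitclsemimartingale}, in particular that no new deterministic contribution is generated (so that $\beta'=\beta$) and that the $X_{t-}$-proportional contributions reassemble into the new exponents $\Phi'$ and $\Xi'$ through \eqref{eq:newbranching}--\eqref{eq:newxi}.
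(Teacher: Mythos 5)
Your argument is correct. The first half (the martingale property) is essentially identical to the paper's: both exploit that $\exp(\cW-\cK)$ is a non-negative local martingale, reduce $\EE[\exp(\cW_t-\cK_t)]=1$ to the affine transform formula of Lemma \ref{lem:extendedlaplacefouriercbitcl} at $(u_1,u_2,u_3)=(\zeta,-\mu,\lambda)$, observe that $\zeta\in\cS^{(-\mu,\lambda)}$ so that Theorem \ref{thm:lifetimetheoremcbitcl}(i) gives infinite lifetime, and use the uniqueness guaranteed by Assumption \ref{ass:lipschitzcbi} to identify the constant solution $\cV\equiv\zeta$, $\cU(t)=t\Psi(\zeta)$. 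The second half takes a mildly different but equivalent route: you apply the abstract semimartingale Girsanov theorem to transform the differential characteristics and then invoke Proposition \ref{prop:cbitclsemimartingale}, whereas the paper writes the density explicitly as a stochastic exponential in terms of the drivers of the extended Dawson--Li representation \eqref{eq:cbitclsde1}--\eqref{eq:cbitclsde2}, applies Girsanov separately to the Brownian motions and the Poisson random measures, rewrites the SDE under $\PP'$, and concludes via Theorem \ref{thm:weakequivalencecbitcl}. Your characteristics-level version buys a slightly more streamlined identification of the new parameters (the multiplication of the jump compensator by $e^{\zeta x+\lambda z}$ and the invariance of $C$ are immediate), at the cost of the drift book-keeping you flag at the end; the paper's SDE-level version makes the new representation of $(X,Z)$ under $\PP'$ completely explicit, which is what is needed for simulation and for the applications in Section \ref{sec:examples}. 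One small point in your book-keeping: the Girsanov drift correction does produce a new deterministic term, namely $\int_0^1 x(e^{\zeta x}-1)\nu(\ud x)$ from the state-independent part of $F_t$, but this is exactly absorbed by replacing $\nu$ with $\nu'$ in the constant part $\beta+\int_0^1 x\,\nu(\ud x)$ of \eqref{eq:char}, so that $\beta'=\beta$ as claimed; your admissibility checks (in particular $\int_1^{+\infty}x\,e^{\zeta x}\pi(\ud x)<+\infty$ via \eqref{eq:derivativebranchingcbi} and Assumption \ref{ass:lipschitzcbi}) are the right ones.
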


\begin{table}[b] 
	\centering
	\captionsetup{justification=centering}
	\begin{tabular}{|l|}
		\hline
		CBITCL parameters under $\PP^{\prime}$ \\
		\hline
		\hline
		$\beta^{\prime} := \beta$ \\
		\hline
		$\nu^{\prime}(\ud x) := e^{\zeta x}\nu(\ud x)$ \\
		\hline
		$b_X^{\prime} := b_X - \zeta\,\sigma_X^2 - \lambda\,\rho\,\sigma_X\sigma_Z - \int_0^{+\infty} {x(e^{\zeta x}-1)\pi(\ud x)}$ \\ 
		\hline
		$\sigma_X^{\prime} := \sigma_X$ \\
		\hline
		$\pi^{\prime}(\ud x) := e^{\zeta x}\pi(\ud x)$ \\
		\hline
		$\rho^{\prime} := \rho$ \\
		\hline
		$b_Z^{\prime} := b_Z + \zeta\,\rho\,\sigma_X\sigma_Z + \lambda\,\sigma^2_Z + \int_{|z|<1} {z\,(e^{\lambda z}-1)\gamma(\ud z)}$ \\
		\hline
		$\sigma_Z^{\prime} := \sigma_Z$ \\
		\hline
		$\gamma^{\prime}(\ud z) := e^{\lambda z}\,\gamma(\ud z)$ \\
		\hline
	\end{tabular}
	\caption{Parameter transformations from $\PP$ to $\PP^{\prime}$\\ for the CBITCL process $(X, Z)$.}
	\label{table:newparameterscbitcl}
\end{table}

\begin{proof}
By Lemma \ref{lem:exponentialcompensator}, the process $\exp(\cW - \cK)$ is a local martingale and, by Fatou's lemma, also a supermartingale. Therefore, to prove the martingale property of $(\exp(\cW_t - \cK_t))_{t\in[0,\horizon]}$, it suffices to show that $\EE[\exp(\cW_{\horizon} - \cK_{\horizon})]=1$. More specifically, making use of equations \eqref{eq:processW} and \eqref{eq:exponentialcompensator}, we will prove that
\be\label{eq:expectation}
e^{-\zeta X_0 - \horizon\Psi(\zeta)}\,\EE\bigl[e^{\zeta X_{\horizon} -(\Phi(\zeta) + \zeta\,\lambda\,\rho\,\sigma_X\sigma_Z + \Xi(\lambda))Y_{\horizon} + \lambda Z_{\horizon}}\bigr] = 1.
\ee
Recalling the notation introduced in Section \ref{sec:exponentialmomentscbitcl}, we observe that $\zeta\leq\chi^{(-\Phi(\zeta)-\zeta\,\lambda\,\rho\,\sigma_X\,\sigma_Z-\Xi(\lambda),\lambda)}$. Theorem \ref{thm:lifetimetheoremcbitcl} therefore implies that $\uT^{(\zeta,-\Phi(\zeta)-\zeta\,\lambda\,\rho\,\sigma_X\,\sigma_Z-\Xi(\lambda),\lambda)}=+\infty$, thereby showing that the expectation in \eqref{eq:expectation} is finite.
Moreover, under Assumption \ref{ass:lipschitzcbi}, there exists a unique solution to the extended Riccati system \eqref{eq:extendedcbitclode_0}-\eqref{eq:extendedcbitclode} with $(u_1,u_2,u_3)=(\zeta, -\Phi(\zeta)-\zeta\,\lambda\,\rho\,\sigma_X\,\sigma_Z-\Xi(\lambda), \lambda)$. The solution to \eqref{eq:extendedcbitclode} is given by the constant function $\cV(\cdot,\zeta,-\Phi(\zeta)-\zeta\,\lambda\,\rho\,\sigma_X\,\sigma_Z-\Xi(\lambda),\lambda)=\zeta$, which in turn implies that $\cU(t,\zeta,-\Phi(\zeta)-\zeta\,\lambda\,\rho\,\sigma_X\,\sigma_Z-\Xi(\lambda),\lambda)=t\Psi(\zeta)$, for all $t\geq0$. 
The validity of \eqref{eq:expectation} then follows directly from Lemma \ref{lem:extendedlaplacefouriercbitcl}.
We have thus shown that \eqref{eq:measurechangecbitcl} defines a probability measure $\PP^{\prime}\sim\PP$ with density process $(\exp(\cW_t - \cK_t))_{t\in[0,\horizon]}$.

In order to show that $(X,Z)$ is a CBITCL process under $\PP^{\prime}$, we first express $(\exp(\cW_t-\cK_t))_{t\in[0,\horizon]}$ as a stochastic exponential, making use of \cite[Theorem II.8.10]{JS03} together with \eqref{eq:cbitclsde1}-\eqref{eq:cbitclsde2}:
\begin{align*}
e^{\cW - \cK} &= \cE\biggl( \zeta\,\sigma_X\int_0^{\cdot} {\sqrt{X_s}\,\ud B_s^X} + \lambda\,\sigma_Z\int_0^{\cdot} {\sqrt{X_s}\,\ud B_s^Z} +  \int_0^{\cdot}\int_0^{+\infty} {(e^{\zeta x}-1)\tildeN_0(\ud s,\ud x)} \biggr)\\
&\quad\times\cE\biggl( \int_0^{\cdot}\int_0^{X_{s-}}\int_0^{+\infty} {(e^{\zeta x}-1)\tildeN_1(\ud s,\ud u,\ud x)} + \int_0^{\cdot}\int_0^{X_{s-}}\int_{\R} {(e^{\lambda z}-1)\tildeN_2(\ud s,\ud u,\ud z)} \biggr).
\end{align*}
By Girsanov's theorem, the processes $B^{\prime,X}=(B_t^{\prime,X})_{t\in[0,\horizon]}$ and $B^{\prime,Z}=(B_t^{\prime,Z})_{t\in[0,\horizon]}$ defined by
\[
B_t^{\prime,X} := B_t^X - (\zeta\,\sigma_X + \lambda\,\rho\,\sigma_Z)\int_0^t {\sqrt{X_s}\,\ud s} 
\quad \text{ and } \quad 
B_t^{\prime,Z} := B_t^Z - (\zeta\,\rho\,\sigma_X + \lambda\,\sigma_Z)\int_0^t {\sqrt{X_s}\,\ud s},
\]
for all $t\in[0,\horizon]$, are Brownian motions under the probability measure $\PP^{\prime}$, with correlation $\rho$.
Again by Girsanov's theorem, under $\PP^{\prime}$ the compensated Poisson random measures associated to $N_0(\ud t, \ud x)$, $N_1(\ud t, \ud u, \ud x)$, and $N_2(\ud t, \ud u, \ud z)$ are respectively given by
\begin{align*}
\tildeN_0^{\prime}(\ud t, \ud x) &:= N_0(\ud t, \ud x) - e^{\zeta x}\nu(\ud x)\,\ud t,\\
\tildeN_1^{\prime}(\ud t, \ud u, \ud x) &:= N_1(\ud t, \ud u, \ud x) - e^{\zeta x}\pi(\ud x)\,\ud u\,\ud t,\\
\tildeN_2^{\prime}(\ud t, \ud u, \ud z) &:= N_2(\ud t, \ud u, \ud z) - e^{\lambda z}\gamma(\ud z)\,\ud u\,\ud t.
\end{align*} 
Therefore, under the probability measure $\PP^{\prime}$, the extended Dawson-Li representation \eqref{eq:cbitclsde1}-\eqref{eq:cbitclsde2} of $(X,Z)$ can be rewritten as follows:
\begin{align*}
X_t &= X_0 + \int_0^t {(\beta^{\prime} - b_X^{\prime}X_s)\ud s} + \sigma_X\int_0^t {\sqrt{X_s}\,\ud B_s^{\prime,X}}\\
&\quad+ \int_0^t \int_0^{+\infty} {x\,N_0(\ud s, \ud x)} + \int_0^t \int_0^{X_{s-}}\int_0^{+\infty} {x\tildeN_1^{\prime}(\ud s, \ud u, \ud x)},\\
Z_t &= b_Z^{\prime}\int_0^t {X_s\,\ud s} + \sigma_Z\int_0^t {\sqrt{X_s}\,\ud B_s^{\prime,Z}} + \int_0^t \int_0^{X_{s-}}\int_{|z| \geq 1} {z\,N_2(\ud s, \ud u, \ud z)}\\
&\quad+ \int_0^t \int_0^{X_{s-}}\int_{|z| < 1} {z\,\tildeN_2^{\prime}(\ud s, \ud u, \ud z)},
\end{align*}
where the parameters $\beta^{\prime}$, $b_X^{\prime}$, $b^{\prime}_Z$, $\sigma_X^{\prime}$, $\sigma_Z^{\prime}$, and $\rho^{\prime}$ are given as in Table \ref{table:newparameterscbitcl}. 
By Theorem  \ref{thm:weakequivalencecbitcl}, under the probability measure $\PP^{\prime}$ the process $(X,Z)$ is therefore equivalent to a CBITCL process. 
Moreover, Assumption \ref{ass:no_common_jumps} remains satisfied under $\PP^{\prime}$, as a consequence of the fact that $\PP^{\prime}\sim\PP$.
\end{proof}

As pointed out at the end of Section \ref{sec:affine}, in financial applications the component $Z$ of a CBITCL process $(X,Z)$ is typically related to the log-price process of an asset. In order to ensure absence of arbitrage, it is useful to have conditions characterizing the martingale property of $\exp(Z)$. To this end, by exploiting the previous results, we can state the following corollary.

\begin{cor}	\label{cor:martingale}
Let $(X, Z)$ be a $\uCBITCL(X_0, \Psi, \Phi, \Xi, \rho)$ and suppose that Assumption \ref{ass:lipschitzcbi} holds. Then, the process $(e^{Z_t})_{t\in[0,\horizon]}$ is a martingale if and only if $1\in\cD_Z$ and $\Xi(1)=0$.
\end{cor}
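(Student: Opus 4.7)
The plan is to apply Theorem~\ref{thm:girsanovcbitcl} with the specific choice $(\zeta,\lambda) = (0,1)$. With this choice, definition~\eqref{eq:processW} yields $\cW_t = Z_t$, and since $\Psi(0) = \Phi(0) = 0$ directly from~\eqref{eq:immigrationmechanism} and~\eqref{eq:newbranching}, the exponential compensator~\eqref{eq:exponentialcompensator} collapses to $\cK_t = Y_t\,\Xi(1)$. Under the hypothesis that $1 \in \cD_Z$ and $\Xi(1) = 0$, this forces $\cK \equiv 0$, so Theorem~\ref{thm:girsanovcbitcl} immediately gives that $(e^{Z_t})_{t \in [0,\horizon]} = (e^{\cW_t - \cK_t})_{t \in [0,\horizon]}$ is a martingale, establishing the sufficiency direction.

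For the converse, I would start from the martingale property, which yields $\EE[e^{Z_t}] = 1 < +\infty$ for all $t \in [0,\horizon]$. The first task is to deduce that $1 \in \cD_Z$. To this end, I would exploit the Lamperti-type representation from Definition~\ref{def:cbitcl}, writing $Z_t = N_{Y_t}$ with $N$ a L\'evy process independent of $X$, and hence of $Y$. Conditioning on $Y_t$ and recalling that for a L\'evy process $\EE[e^{N_s}]$ is finite for some (equivalently, every) $s > 0$ if and only if $1 \in \cD_Z$, the finiteness of $\EE[e^{Z_t}]$ combined with $\PP(Y_t > 0) > 0$ (which holds in the non-trivial regime) forces $1 \in \cD_Z$. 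Once this is secured, characterisation~\eqref{eq:characterizationmaximumjointlifetimecbitcl} gives $\uT^{(0,0,1)} > \horizon$, and Lemma~\ref{lem:extendedlaplacefouriercbitcl} yields the identity
\[
\cU(t,0,0,1) + \cV(t,0,0,1)\,X_0 = 0, \qquad \text{for all } t \in [0,\horizon].
\]
Differentiating at $t = 0$ via the Riccati equations~\eqref{eq:extendedcbitclode_0}--\eqref{eq:extendedcbitclode}, using $\cV(0,0,0,1) = 0$ together with $\Psi(0) = \Phi(0) = 0$, collapses this to $X_0\,\Xi(1) = 0$, which gives $\Xi(1) = 0$ when $X_0 > 0$; the boundary case $X_0 = 0$ is handled by iterating the differentiation and invoking the non-triviality of $\Psi$ (so that $\Psi(\cV(t,0,0,1)) \equiv 0$ combined with $\cV \not\equiv 0$ when $\Xi(1) \neq 0$ yields a contradiction).

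The main obstacle lies in the converse direction, precisely in the step that promotes finiteness of $\EE[e^{Z_t}]$ to $1 \in \cD_Z$: this requires the probabilistic independence between $N$ and $X$ afforded by the Lamperti-type representation~\eqref{eq:stochtimechangeequation}, which is more transparent than working directly from the Dawson--Li form~\eqref{eq:dawsonli}. Once $1 \in \cD_Z$ is in hand, the rest is a differentiation of the affine transform formula, and the sufficiency direction is essentially a one-line application of Theorem~\ref{thm:girsanovcbitcl}.
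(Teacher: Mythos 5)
Your sufficiency direction is exactly the paper's argument: take $(\zeta,\lambda)=(0,1)$ in \eqref{eq:processW}, note $\Psi(0)=\Phi(0)=0$ so that $\cK=Y\,\Xi(1)\equiv0$, and invoke Theorem \ref{thm:girsanovcbitcl}. That part is fine. The second half of your converse (pass to $\cU(t,0,0,1)+\cV(t,0,0,1)X_0=0$ and differentiate the Riccati system at $t=0$) is also a workable variant of what the paper does; the paper instead uses the \emph{conditional} transform formula to force $\cV(\cdot,0,0,1)\equiv0$ outright and then reads $\Xi(1)=0$ off the ODE \eqref{eq:extendedcbitclode}, which avoids your case distinction on $X_0$.

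The genuine gap is the step you yourself single out as the crux: the claim that $N$ is ``a L\'evy process independent of $X$, and hence of $Y$.'' This is false in general. In Definition \ref{def:cbitcl}, $(M,N)$ is a \emph{joint} two-dimensional L\'evy process whose Gaussian covariance is $\rho\,\sigma_X\sigma_Z\neq0$ in general, and $X=X_0+M_{Y}+K$ is built from $M$; Assumption \ref{ass:no_common_jumps} removes common jumps but explicitly retains correlation between the continuous martingale parts of $M$ and $N$. So you cannot condition on $Y_t$ and treat $N$ as an independent L\'evy process evaluated at an independent time. The paper sidesteps this entirely by citing \cite[Theorem 2.14-(a)]{KRM15}, a general result for affine processes which upgrades $\EE[e^{Z_{\horizon}}]<+\infty$ to $1\in\cD_Z$. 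Your route can be repaired, but it takes more care than you indicate: under Assumption \ref{ass:no_common_jumps} the jump part of $N$ (governed by $\gamma$, which is what determines $\cD_Z$, the Gaussian part having all exponential moments) is independent of $(M,K)$, so one can condition on $\sigma(M,K)$ — not merely on $Y_t$ — split off the $\gamma$-jump contribution $J_{Y_t}$, and argue that $1\notin\cD_Z$ forces $\EE[e^{J_s}]=+\infty$ for every $s>0$ and hence $\EE[e^{Z_t}]=+\infty$ on $\{Y_t>0\}$. As written, however, the independence you invoke does not hold, and this is precisely the load-bearing step of your converse.
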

\begin{proof}
If $1\in\cD_Z$ and $\Xi(1)=0$, by making use of \eqref{eq:processW} with $(\zeta,\lambda)=(0,1)$ together with Lemma \ref{lem:exponentialcompensator} and Theorem \ref{thm:girsanovcbitcl}, we directly obtain that $(e^{Z_t})_{t\in[0,\horizon]}$ is a martingale.
Conversely, if $(e^{Z_t})_{t\in[0,\horizon]}$ is a martingale, then $\EE[e^{Z_{\horizon}}]=1$. By \cite[Theorem 2.14-(a)]{KRM15}, it follows that $1\in\cD_Z$. Moreover, in view of \eqref{eq:characterizationmaximumjointlifetimecbitcl}, we have that $\uT^{(0,0,1)}>\horizon$. By Lemma \ref{lem:extendedlaplacefouriercbitcl}, the martingale property of $(e^{Z_t})_{t\in[0,\horizon]}$ necessarily implies that $\cV(t,0,0,1)=0$, for all $t\in[0,\horizon]$. Since the ODE \eqref{eq:extendedcbitclode} admits a unique solution under Assumption \ref{ass:lipschitzcbi}, it  follows that $\Xi(1)=0$.
\end{proof}

\section{Examples and option pricing applications}	\label{sec:examples}

In this section, we present some examples of CBITCL processes that possess a self-exciting behavior and are particularly appropriate for financial applications. In Section \ref{sec:alphacir}, we analyze from the viewpoint of CBITCL processes the alpha-CIR process recently studied in \cite{JMS17,JMSZ21}. In Section \ref{sec:CGMY}, we investigate the CBITCL process adopted in \cite{FGS22} for the modelling of multi-currency markets with stochastic volatility. Finally, in Section \ref{sec:pricing}, we briefly discuss some general aspects of the use of CBITCL processes for option pricing applications. 

\subsection{Alpha-CIR process and geometric Brownian motion}\label{sec:alphacir}

We say that a process $(X,Z)=((X_t, Z_t))_{t\geq0}$ is an \emph{$\alpha$-CIR-time-changed geometric Brownian motion} if it is given by
\be\label{eq:stochtimechangeequationalphacir}
\left\{\ba
X_t &= X_0 + \int_0^t {(\beta - b\,X_s)\,\ud s} + \sigma\,B^X_{Y_t} + \eta\,L_{Y_t},\\
Z_t &= B^Z_{Y_t} -\frac{1}{2}Y_t,
\ea\right.
\ee
with $Y_t=\int_0^tX_s\,\ud s$, for all $t\geq0$, and where $X_0\geq0$, $\beta\geq0$, $b\geq0$, $\sigma\geq0$, $\eta\geq0$, $B^X$ and $B^Z$ are one-dimensional Brownian motions with correlation $\rho$,  with $\rho\in[-1,1]$, and $L$ is a spectrally positive compensated $\alpha$-stable L\'evy process independent of $(B^X,B^Z)$, with stability parameter $\alpha\in(1,2)$ and L\'evy measure $C_{\alpha}\,z^{-1-\alpha}\ind_{\{z>0\}}\ud z$ where $C_{\alpha}$ is a suitable normalization constant. 

This specification has been recently adopted for stochastic volatility modelling in \cite{JMSZ21}, under the name of {\em alpha-Heston stochastic volatility model}. The process defined in \eqref{eq:stochtimechangeequationalphacir} belongs to the class of CBITCL processes introduced in Definition \ref{def:cbitcl}, with $K_t := \beta t$, $M_t := -b t + \sigma B_t^X + \eta L_t$, and $N_t := B_t^Z -t/2$, for all $t\geq0$. 
The component $X$ is a CBI process with $\nu=0$ and $\pi(\ud z) = \eta^{\alpha}C_{\alpha}z^{-1-\alpha}\ind_{\{z>0\}}\ud z$, with immigration mechanism $\Phi(x)=\beta x$ and branching mechanism
\be\label{eq:alphastablebranching}
\Phi(x) = -b\,x + \frac{1}{2}(\sigma\,x)^2 + C_{\alpha}\,\Gamma(-\alpha)(-\eta\,x)^{\alpha},
\ee
where $\Gamma$ denotes the Gamma function extended to $\R\setminus\mathbb{Z}_-$ (see \cite{Lebedev}). 
Under this specification, we have $\phi = 0$ and $\psi = +\infty$, implying that $\cD_X = (-\infty, 0]$. Since $\int_1^{+\infty}z\pi(\ud z)<+\infty$, Assumption \ref{ass:lipschitzcbi} is satisfied. 
Moreover, Assumption \ref{ass:no_common_jumps} is trivially satisfied.
As a consequence of Theorem \ref{thm:lifetimetheoremcbitcl}, the process $X$ does not admit exponential moments of any order, i.e., $\EE[e^{uX_t}]=+\infty$ for all $u>0$ and $t>0$. This fact will motivate the study of {\em tempered} $\alpha$-stable processes in Section \ref{sec:CGMY}.

\begin{rem}
The process $X$ as introduced in \eqref{eq:stochtimechangeequationalphacir} corresponds to the time change representation of a stable Cox-Ingersoll-Ross process (also named \emph{$\alpha$-CIR process}, see \cite{LM15,JMS17,JMSZ21} and \cite[Section 2.6.2]{phdthesis_szulda}). An $\alpha$-CIR process can be also defined by the following SDE:
\[
X_t = X_0 + \int_0^t {(\beta -  b\,X_s)\ud s} + \sigma\int_0^t {\sqrt{X_s}\,\ud B^X_s} + \eta\int_0^t {\sqrt[\alpha]{X_{s-}}\,\ud L_s}.
\]
By  \cite[Corollary 6.3]{FL10}, this SDE admits a unique strong solution that is a CBI process.
\end{rem}

In \eqref{eq:stochtimechangeequationalphacir}, the component $Z$ is defined as a time-changed Brownian motion with drift. This specification ensures that $\exp(Z)=\mathcal{E}(B_Y)$ is a martingale (see Corollary 4.3). In view of financial applications, $Z$ can therefore represent the discounted log-price process of a risky asset under a risk-neutral probability measure. The associated L\'evy exponent is given by $\Xi(u)=u(u-1)/2$. Using the notation introduced in Section \ref{sec:asymptotic}, we have that
\[
\begin{cases}
\chi(u) = 0,&\text{ for }u\in[0,1],\\
\chi(u) < 0, &\text{ otherwise}.
\end{cases}
\]
Corollary \ref{cor:lifetimeTu} therefore implies that $\uT(u)=+\infty$, for every $u\in[0,1]$, while $\uT(u)=0$, for every $u\notin[0,1]$. In other words, for an $\alpha$-CIR-time-changed geometric Brownian motion it holds that $\EE[e^{uZ_t}]<+\infty$ for all $u\in[0,1]$ and $t>0$, while $\EE[e^{uZ_t}]=+\infty$ for all $u\notin[0,1]$ and $t>0$.
These results are consistent with \cite[Proposition 4.1 and Corollary 4.2]{JMSZ21}, where additional restrictions on the model parameters are required.

\begin{rem}
As explained in part (2) of Remark \ref{rem:expmoments}, these results on the finiteness of exponential moments of $Z$ can be used to study the behavior of the implied volatility smile. Let us denote by $\sigma(T,k)$ the implied volatility of a European Call option written on an asset with price process $\exp(Z)$, with maturity $T$ and strike $\exp(k)$.
By applying \cite[Theorems 3.2 and 3.4]{Lee04}, we can deduce that the asymptotic behavior of $\sigma(T,k)$ at extreme strikes is explicitly described as follows:
\[
\underset{k\to\pm\infty}\limsup\,\frac{\sigma^2(T, k)}{|k|} = \frac{2}{T}, \qquad \text{ for all } T > 0.
\]
\end{rem}

Concerning the long-term behavior of the process $Z$, if we assume that $b>0$, then by applying Proposition \ref{prop:longtermbehavior} we obtain
\[
\frac{1}{t}\log\EE\bigl[e^{uZ_t}\bigr] \underset{t\to+\infty}{\longrightarrow} \beta\,\xi(u), \qquad \text{for $u\in[0,1]$.}
\]
If we assume in addition that $\rho \leq 0$ (which is consistent with the so-called {\em leverage effect} in stochastic volatility models, see for instance \cite{CW04}), then the quantity $\xi(u)$ can be explicitly computed as follows, making use of equation \eqref{eq:equationXi}:
\[
\xi(u) = \Phi_u^{-1}\biggl(\frac{u(1-u)}{2}\biggr),
\]
where $\Phi_u^{-1}$ denotes the inverse of the function $x\mapsto\Phi_u(x):= \Phi(x) + \rho\,\sigma\,u\,x$, with $\Phi$ given by \eqref{eq:alphastablebranching}, which can be easily seen to be a bijection from $\R_-$ to $\R_+$, for every $u\in[0,1]$.


\subsection{Tempered $\alpha$-stable CBI process and CGMY process}
\label{sec:CGMY}

The CBITCL process considered in the previous subsection has the drawback that its CBI component does not possess exponential moments. In finance applications, the existence of exponential moments often represents an essential requirement. For this reason, we now present a CBITCL process that enjoys good integrability properties. The example considered in this subsection relies on tempered $\alpha$-stable CBI processes, as introduced in \cite{FGS21} in interest rate modelling (see also \cite[Section 2.7]{phdthesis_szulda}). 

We recall from \cite{FGS21} that a CBI process $X=(X_t)_{t\geq0}$ is said to be {\em tempered $\alpha$-stable} if the L\'evy measures appearing in \eqref{eq:immigrationmechanism} and \eqref{eq:branchingmechanism} are respectively given by 
\[
\nu = 0
\qquad\text{ and }\qquad
\pi(\ud z) = C_{\alpha}\,z^{-1-\alpha}\,e^{-\theta z}\ind_{\{z>0\}}\ud z,
\]
where $\theta > 0$, $\alpha \in (1,2)$ and $C_{\alpha} > 0$ is a normalization constant. 
Under this specification, the immigration mechanism reduces to $\Psi(u)=\beta u$, while the branching mechanism can be explicitly computed as
\be\label{eq:temperedstablebranching}
\Phi(u) = -bu + \frac{1}{2}(\sigma u)^2 + C_{\alpha}\,\Gamma(-\alpha)\bigl( (\theta - u)^{\alpha} - \theta^{\alpha} + \alpha\theta^{\alpha- 1} u\bigr), 
\qquad \text{ for all } u \leq \theta.
\ee
It can be easily verified that $\cD_X = (-\infty, \theta]$ and Assumption \ref{ass:lipschitzcbi} is satisfied (see also \cite[Lemma 2.19]{phdthesis_szulda}). The existence of exponential moments of $X$ can be characterized by relying on Corollary \ref{cor:lifetimetheoremcbitcl} (noting that, in the present case, $\phi=\theta$ and $\psi=+\infty$). Indeed, taking $u_2=u_3=0$ in Corollary \ref{cor:lifetimetheoremcbitcl}, it follows that $\EE[e^{uX_T}] < +\infty$ holds for all $u\leq \theta$ and $T > 0$ if and only if $\Phi(\theta) \leq 0$, namely, if and only if 
\be\label{eq:momentcondition}
b \geq \frac{\sigma^2}{2}\theta + C_{\alpha}\,\Gamma(-\alpha)\,\theta^{\alpha-1}(\alpha - 1).
\ee
We have therefore shown that, in the case of tempered $\alpha$-stable CBI processes, the existence of exponential moments amounts to a simple condition on the parameters characterizing the process.

\begin{rem}
As an example of application of Theorem \ref{thm:girsanovcbitcl}, we show that tempered $\alpha$-stable CBI processes can be easily constructed from non-tempered $\alpha$-stable CBI processes by means of an equivalent change of probability of the type \eqref{eq:measurechangecbitcl}. Let $X$ be an $\alpha$-CIR process, as in Section \ref{sec:alphacir}, and define the process $\cW$ as in \eqref{eq:processW} with $\zeta=-\theta$ and $\lambda=0$,  so that $\cW=\theta(X_0-X)$, with $\theta>0$. By Theorem \ref{thm:girsanovcbitcl}, one can construct a probability measure $\PP^{\prime}\sim\PP$ with density as in \eqref{eq:measurechangecbitcl} such that under $\PP'$ the L\'evy measures of $X$ are given by $\nu^{\prime}=0$ and $\pi^{\prime}(\ud z)=e^{-\theta z}\eta^{\alpha}C_{\alpha}z^{-1-\alpha}\ind_{\{z>0\}}\ud z$. We have therefore obtained that the process $X$ is a tempered $\alpha$-stable CBI process under $\PP^{\prime}$. An analogous change of measure technique has been also employed in \cite[Proposition 4.1]{JMS17}.
\end{rem}

A CBITCL process $(X,Z)$ based on a tempered $\alpha$-stable CBI process can be constructed as follows (see also \cite[Definition 2.18]{phdthesis_szulda}):
\be\label{eq:stochtimechangeequationtemperedstable}
\left\{\ba
X_t &= X_0 + \int_0^t {(\beta - b\,X_s)\ud s} + \sigma\,B_{Y_t} + L_{Y_t}^X,\\
Z_t &= L_{Y_t}^Z,
\ea\right.
\ee
with $Y_t=\int_0^tX_s\ud s$, for all $t\geq0$, where $X_0\geq0$, $\beta\geq0$, $b\in\R$, $\sigma\geq0$, $B$ is a standard Brownian motion independent of $L^X$, which is a spectrally positive  tempered $\alpha$-stable compensated L\'evy process with L\'evy measure $C_{\alpha}\,z^{-1-\alpha}\,e^{-\theta z}\ind_{\{z>0\}}\ud z$, where $\alpha \in (1,2)$, $\theta > 0$,and $C_{\alpha} > 0$ is a suitable normalization constant, and $L^Z$ is a CGMY process (see \cite{CGMY03}) independent of both $B$ and $L^X$.
We recall that $L^Z$ is a CGMY process if its L\'evy measure $\gamma$ is of the form
\be	\label{eq:CGMY_measure}
\gamma(\ud z) = C_Y\bigl(z^{-1-Y}e^{-Mz}\ind_{\{z>0\}} + |z|^{-1-Y}e^{-G|z|}\ind_{\{z<0\}}\bigr)\ud z, 
\ee
where the normalization constant can be chosen as $C_Y = 1/\Gamma(-Y)$. The parameters $G > 0$ and $M > 0$ temper the downward and the upward jumps, respectively, while the parameter $Y \in (1,2)$ determines the local behavior of $L^Z$, similarly to $\alpha$ above. The L\'evy exponent $\Xi$ associated to a CGMY process $L^Z$ with L\'evy triplet $(0, 0, \gamma\bigr)$ is given by
\[
\Xi(u) = \int_{\R} (e^{zu} - 1 - z u)\gamma_Z(\ud z), 
\qquad \text{ for all } u \in \im\R.
\]
It can be easily checked that $\cD_Z = [-G, M]$ and the L\'evy exponent $\Xi$ takes the explicit form
\[
\Xi(u) = (M - u)^Y - M^Y + (G + u)^Y - G^Y + u\,Y\,(M^{Y - 1} - G^{Y - 1}),
\qquad\text{ for all } u\in[-G,M].
\]

For simplicity of presentation, let us assume that $G=M$. In this case, $\Xi:[-M,M]\to\R_+$ is a convex function with minimum $\Xi(0)=0$ and maximum $\Xi(-M)=\Xi(M)=2M^Y(2^{Y-1}-1)$.
If $2M^Y(1-2^{Y-1})\geq\Phi(\theta)$, which can be rewritten in the form
\be\label{eq:conditionM}
M \leq \biggl(\frac{\Phi(\theta)}{2\bigl(1-2^{Y-1}\bigr)}\biggr)^{1/Y},
\ee
we have that $\chi(u) = \theta \geq 0$ for every $u \in [-M, M]$, using the notation introduced in Section \ref{sec:asymptotic}.
Corollary \ref{cor:lifetimeTu} then implies that $\uT(u)=+\infty$ for every $u\in[-M,M]$, while $\uT(u)=0$ for every $u\notin[-M,M]$. 
Similarly as in Section \ref{sec:alphacir}, these results on the finiteness of exponential moments of $Z$ can be used to characterize the tail behavior of the implied volatility smile. Indeed, under condition \eqref{eq:conditionM} and assuming $M>1$, an application of \cite[Theorems 3.2 and 3.4]{Lee04} yields that
\[\begin{aligned}
\underset{k\to-\infty}\limsup\,\frac{\sigma^2\bigl(T, k\bigr)}{|k|} &= \frac{2}{T}\bigl(1 - 2(\sqrt{M^2 + M}-M)\bigr),\\ \underset{k\to+\infty}\limsup\,\,\frac{\sigma^2\bigl(T, k\bigr)}{k} &= -\frac{2}{T}\bigl(1 + 2(\sqrt{M^2 - M}-M)\bigr),
\end{aligned}
\qquad\qquad\text{ for all }T>0.
\]

The long-term behavior of the process $Z$ can be determined by relying on Proposition \ref{prop:longtermbehavior}. Under conditions \eqref{eq:momentcondition} and \eqref{eq:conditionM}, we have that
\[
\frac{1}{t}\log\EE\bigl[e^{uZ_t}\bigr] \underset{t\to+\infty}{\longrightarrow} \beta\,\xi(u), \qquad \text{for all } u\in[-M,M],
\]
where $\xi(u)$ is defined in the statement of Proposition \ref{prop:longtermbehavior}. 

The quantity $\xi(u)$ can be explicitly determined under the following additional condition:
\be\label{eq:improvedmomentcondition}
b \geq \sigma^2\theta + C_{\alpha}\Gamma(-\alpha)\,\theta^{\alpha-1}\alpha.
\ee
Condition \eqref{eq:improvedmomentcondition} is stronger than condition \eqref{eq:momentcondition} and, together with the convexity and continuity of $\Phi$, it implies that $\Phi$ is decreasing on $(-\infty, \theta]$. In this case, making use of \eqref{eq:equationXi}, we have that
\[
\xi(u) = \Phi^{-1}\bigl(2M^Y - (M+u)^Y - (M-u)^Y\bigr),
\]
where $\Phi^{-1}$ denotes the inverse function of the branching mechanism $\Phi$ given by \eqref{eq:temperedstablebranching}, which under condition \eqref{eq:improvedmomentcondition} is a bijection from $(-\infty, \theta]$ to $[\Phi(\theta), +\infty)$.

\begin{rem}\label{remFGS22}
(1) The present specification does not necessarily guarantee the martingale property of the process $\exp(Z)$ (compare with Corollary \ref{cor:martingale}). Therefore, in view of financial applications and similarly as in \cite{FGS22}, the CBITCL process $(X,Z)$ may be used to model the discounted price process $S=(S_t)_{t\geq0}$ of a risky asset as follows:
\begin{align}\label{eq:FGS22model}
\log S_t := \lambda Z_t + \zeta(X_t-X_0) - \cK_t,
\qquad\text{ for all } t\geq0,
\end{align}
where $\cK$ denotes the exponential compensator (see Lemma \ref{lem:exponentialcompensator}), with $\zeta\leq\theta$ and $\lambda\in[-G,M]$. Under this specification, the CBI process $X$ plays the role of stochastic volatility, while the parameter $\zeta$ determines the correlation between the log-price process and its volatility. A direct application of Theorem \ref{thm:girsanovcbitcl} yields that $S$ is a martingale, thereby ensuring absence of arbitrage.
Moreover, since Assumption \ref{ass:lipschitzcbi} is satisfied, the existence of moments $\EE[S^u_t]$ can be characterized by Theorem \ref{thm:lifetimetheoremcbitcl}.

(2)
As a direct consequence of Theorem \ref{thm:girsanovcbitcl}, the class of CBITCL processes considered in this subsection is stable with respect to equivalent changes of probability of the form considered in Section \ref{sec:girsanovcbitcl}. This property has been used to construct risk-neutral measures that preserve the structure of the model in \cite{FGS22}, where CBITCL processes have been applied to the modelling of multi-currency markets in the presence of stochastic volatility and self-exciting jumps.
\end{rem}

\subsection{Applications to option pricing}\label{sec:pricing}

For simplicity of presentation, we assume that the discounted price process of a risky asset is given by \eqref{eq:FGS22model}, with respect to a CBITCL process $(X,Z)$ and with $(\zeta,\lambda)\in\cD_X\times\cD_Z$. In view of Theorem \ref{thm:girsanovcbitcl}, this implies the martingale property of the price process $S$, thereby ensuring absence of arbitrage. In this specification, the process $X$ represents stochastic volatility, possibly correlated and with common jumps with the asset returns.

For illustrative purposes, let us consider the problem of pricing a European call option, with payoff $(S_T-K)^+$ delivered at maturity $T$. To solve this pricing problem, we will rely on Fourier techniques, which are viable in the present setting due to the analytical tractability of CBITCL processes.
Indeed, the characteristic function $\varphi_t(u):=\mathbb{E}[e^{\im u \log S_t}]$, for $u\in\mathbb{R}$ and $t\geq0$, is available in semi-closed form, up to the solution of the system of Riccati ODEs. The following result is directly obtained as a special case of Lemma \ref{lem:extendedlaplacefouriercbitcl} (see \cite[Lemma 3.9]{FGS22} in the specific case of a tempered $\alpha$-stable CBI process, as considered in Section \ref{sec:CGMY}).

\begin{cor}
For every $u\in\R$ and $t\geq0$, the characteristic function $\varphi_t(u)$ is given by
\[
\varphi_t(u) = \exp\bigl( -\im u \zeta X_0-\im u t\Psi(\zeta) + \cU(t, u_1, u_2, u_3) + \cV(t, u_1, u_2, u_3)X_t \bigr),
\]
where the functions $\cU$ and $\cV$ solve \eqref{eq:cbitclriccati1}-\eqref{eq:cbitclriccati2} 
with
\[
u_1:=\im u \zeta,\qquad
u_2:= -\im u \bigl(\Phi(\zeta) + \zeta\,\lambda\,\rho\,\sigma_X\sigma_Z + \Xi(\lambda)\bigr),\qquad
u_3:= \im u\lambda.
\]
\end{cor}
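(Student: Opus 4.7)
The plan is simply to expand $\log S_t$ using the explicit form of the exponential compensator from Lemma \ref{lem:exponentialcompensator} and then apply the affine transform formula of Proposition \ref{prop:cbitclaffine} to the joint process $(X,Y,Z)$.

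First, I would substitute the definition $\log S_t = \lambda Z_t + \zeta(X_t - X_0) - \cK_t$ into $\im u\log S_t$ and expand $\cK_t = t\Psi(\zeta) + Y_t(\Phi(\zeta) + \zeta\lambda\rho\sigma_X\sigma_Z + \Xi(\lambda))$, obtaining
\[
\im u\log S_t = \im u\zeta X_t + \im u\lambda Z_t - \im u\bigl(\Phi(\zeta) + \zeta\lambda\rho\sigma_X\sigma_Z + \Xi(\lambda)\bigr)Y_t - \im u\zeta X_0 - \im u t\Psi(\zeta).
\]
The last two terms are deterministic and can be factored out of the expectation, leaving
\[
\varphi_t(u) = e^{-\im u\zeta X_0 - \im u t\Psi(\zeta)}\,\EE\bigl[e^{u_1 X_t + u_2 Y_t + u_3 Z_t}\bigr],
\]
with $u_1,u_2,u_3$ defined exactly as in the statement.

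Next, since $\zeta,\lambda\in\R$ and $\Phi(\zeta)+\zeta\lambda\rho\sigma_X\sigma_Z+\Xi(\lambda)\in\R$, the triple $(u_1,u_2,u_3)$ lies in $\im\R\times\im\R\times\im\R\subset\C_-^2\times\im\R$, which is precisely the domain on which Proposition \ref{prop:cbitclaffine} applies without any lifetime restriction. Taking $t=0$ in the conditional formula \eqref{eq:cbitclsemigroup}, and using $Y_0=Z_0=0$, yields
\[
\EE\bigl[e^{u_1 X_t + u_2 Y_t + u_3 Z_t}\bigr] = \exp\bigl(\cU(t,u_1,u_2,u_3) + \cV(t,u_1,u_2,u_3)X_0\bigr),
\]
where $(\cU,\cV)$ solves the Riccati system \eqref{eq:cbitclriccati1}--\eqref{eq:cbitclriccati2}. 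Combining this with the previous display gives the claimed formula.

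There is no genuine obstacle here: the argument is a direct computation followed by a single invocation of the affine transform formula, which is made possible by the fact that $(\zeta,\lambda)\in\cD_X\times\cD_Z$ ensures the exponential compensator $\cK_t$ is well-defined (cf.\ Lemma \ref{lem:exponentialcompensator}) and the arguments fed to the Riccati system remain purely imaginary. The only mild care needed is to note that the restriction to $\C_-^2\times\im\R$ in Proposition \ref{prop:cbitclaffine} is trivially satisfied, so one does not need to invoke the extended transform of Lemma \ref{lem:extendedlaplacefouriercbitcl} or worry about the lifetime $\uT^{(u_1,u_2,u_3)}$.
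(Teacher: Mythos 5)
Your argument is correct and follows the computation the paper intends: expand $\im u\log S_t$ via \eqref{eq:FGS22model} and Lemma \ref{lem:exponentialcompensator}, pull the deterministic terms $-\im u\zeta X_0-\im u t\Psi(\zeta)$ out of the expectation, and apply the affine transform formula to $\EE[e^{u_1X_t+u_2Y_t+u_3Z_t}]$. The one point where you genuinely diverge is the lemma you invoke: the paper obtains the corollary ``as a special case of Lemma \ref{lem:extendedlaplacefouriercbitcl}'', whereas you note that for real $u$ the triple $(u_1,u_2,u_3)$ is purely imaginary, hence lies in $\C_-^2\times\im\R$, so the unextended transform of Proposition \ref{prop:cbitclaffine} applies globally in time with no control of the lifetime $\uT^{(u_1,u_2,u_3)}$. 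That is in fact the more accurate citation, since Lemma \ref{lem:extendedlaplacefouriercbitcl} is stated for real arguments in $\cD_X\times\R\times\cD_Z$; the only thing your shortcut does not deliver is the analytic extension to complex $u$ needed later for the pricing formula \eqref{eq:callOptionLee}, for which the extended transform (or the martingale argument sketched in Section \ref{sec:pricing}) is still required. One discrepancy you should not gloss over: your derivation yields $\cV(t,u_1,u_2,u_3)X_0$, whereas the corollary as printed has $\cV(t,u_1,u_2,u_3)X_t$. Since $\varphi_t(u)=\EE[e^{\im u\log S_t}]$ is an unconditional expectation, the right-hand side must be deterministic, so $X_0$ is correct and the $X_t$ in the statement is a typo; your proof therefore establishes the (corrected) formula rather than the one displayed, and you should say so explicitly instead of claiming to have recovered the statement verbatim.
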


Starting from the above corollary, the pricing problem can be directly solved by relying on the Fourier-based method of \cite{Lee2004}. This approach requires an extension of the domain of $\varphi_T$ to a suitable subset of $\C$. While precise results in this direction can be obtained from Theorem \ref{thm:lifetimetheoremcbitcl}, for the specific case of a call option one can resort to a simpler argument. Indeed, $\varphi_T$ is obviously well posed in $u=0$ and, due to the martingale property of $S$, also in $u=-\im$. In turn, this implies that $\varphi_T$ can be analytically extended to the set $\{u\in\C : -1<\Im(u)<0\}$. We can then apply \cite[Theorem 5.1]{Lee2004} and deduce the following pricing formula, which is valid for every $-1<\alpha<0$:
\be\label{eq:callOptionLee}
\EE[(S_T-K)^+]=\varphi_T(-\im)+\frac{1}{\pi}\int_{0-\alpha\im}^{\infty-\alpha\im}\Re\left(e^{-\im z \log K}\frac{\varphi_T(z-\im)}{-z(z-\im)}\right)\ud z.
\ee

\begin{rem}
In the context of CBITCL processes, the scope of applicability of this Fourier-based pricing technique is not limited to European call options. Indeed, whenever the Fourier transform of the payoff function is known, one can derive a pricing formula along the lines of \eqref{eq:callOptionLee}. Several examples of equity payoff functions for which the Fourier transform is known are provided in \cite{egp2008}. In addition, several payoff functions related to the volatility/variance of the asset price process can be priced by applying the techniques developed in \cite{kmkv2011}, \cite{zk2014} and \cite{dfgg2015}, which are all viable in the present context of CBITCL processes. We also mention \cite{hz2010} and \cite{cfgg2016}, where payoffs depending on several assets are considered, most notably basket options.
\end{rem}

\bibliographystyle{alpha}
\bibliography{biblio_CBITCL}

\end{document}